\renewcommand{\algorithmicrequire}
\newcommand\figcaption{\def\@captype{figure}\caption}
\newcommand\tabcaption{\def\@captype{table}\caption}
\newtheorem{remark}{Remark}
\newtheorem{theorem}{Theorem}
\newtheorem{example}{Example}
\newcommand{\beq}{\begin{equation}}
\newcommand{\eeq}{\end{equation}}
\newcommand{\bea}{\begin{eqnarray}}
\newcommand{\eea}{\end{eqnarray}}
\newcommand{\beas}{\begin{eqnarray*}}
\newcommand{\eeas}{\end{eqnarray*}}
\newcommand{\secref}[1]{\hyperref[#1]{Section~\ref*{#1}}}
\newcommand{\appref}[1]{\hyperref[#1]{Appendix~\ref*{#1}}}
\begin{document}
\title[Operator-Learning-Driven Method for Inverse Source Problems]{Transcending Sparse Measurement Limits: Operator-Learning-Driven Data Super-Resolution for Inverse Source Problem}

\author[G. Pan, J. Zhou, X. Liu, Y. Huang and N. Yi]{
    Guanyu Pan$^{1}$, 
    Jianing Zhou$^{1}$, 
    Xiaotong Liu$^{3}$,
    Yunqing Huang$^{1, 2, * \footnote{Corresponding author.}}$, 
    and Nianyu Yi$^{1, 2}$
}

\address{$^{1}$ School of Mathematics and Computational Science, Xiangtan University, Xiangtan 411105, China}
\address{$^{2}$ National Center for Applied Mathematics in Hunan, Xiangtan 411105, China}
\address{$^{3}$ College of Mathematics and Informatics, South China Agricultural University, Guangzhou 510006, China}
\email{202421511213@smail.xtu.edu.cn (G. Pan), 202421511272@smail.xtu.edu.cn (J. Zhou), \newline
xiaotongliu163@163.com (X. Liu), huangyq@xtu.edu.cn (Y. Huang), yinianyu@xtu.edu.cn (N. Yi)}

\subjclass{}


\begin{abstract}
Inverse source localization from Helmholtz boundary data collected over a narrow aperture is highly ill-posed and severely undersampled, thereby undermining classical solvers (e.g., Direct Sampling Method). We present a modular framework that significantly enhances multi-source localization even with extremely sparse single-frequency measurements.
First, we extend a uniqueness theorem for the inverse source problem, proving that a unique solution is guaranteed despite limited viewing apertures.
Second, we employ a  Deep Operator Network (DeepONet) with a branch–trunk architecture to interpolate the sparse measurements, lifting six to ten samples within the narrow aperture to a sufficiently dense synthetic aperture.
Third, the super-resolved field is fed into the Direct Sampling Method (DSM). For a single source, we derive an error estimate showing that sparse data alone can achieve grid-level precision. In two- and three-source trials, localization from raw sparse measurements is unreliable, whereas DeepONet-reconstructed data reduce localization error by about an order of magnitude and remain effective with apertures as small as $\frac{\pi}{4}$.
By decoupling data interpolation from inversion, the framework allows the interpolation and inversion modules to be swapped with  neural operators and classical algorithms,  respectively, providing a practical and flexible design that improves localization accuracy compared with standard baselines.
\end{abstract}
\keywords{Inverse source problem, Helmholtz equation, Deep Operator Network, Direct Sampling Method}

\thanks{$^*$ Corresponding author.}

\maketitle


\section{Introduction}

The inverse source problem (ISP) is fundamental to acoustics, electromagnetism and biomedical imaging, where the goal is to recover the locations and strengths of unknown sources from the boundary measurements \cite{anastasio2007application, arridge1999optical, bao2002inverse}.
When dense, full-aperture data are available over multiple wavenumbers, a range of classical algorithms, including Newton-type iterative schemes, variational optimization, Bayesian inversion, recursive linearization, and direct sampling method (DSM) can achieve high spatial resolution \cite{bao2015inverse, colton1996simple, el2011inverse, hohage1998convergence, ito2012direct, kirsch1987optimization,  kirsch1998characterization, li2021quality,  YangZhangBoandZhang}.  
In practice, however, experimental constraints are far more stringent: deep-ocean acoustics and through-the-wall radar imaging often yield only a handful of low-frequency samples collected over a narrow aperture \cite{ahmad2008multi, jensen2011computational}. 
Such extreme sparsity renders the inverse problem highly ill-posed, expands the null space of the forward operator, and equivalently, enlarges the admissible solution set, leading to non-uniqueness and a pronounced loss of resolution \cite{bleistein1977nonuniqueness, devaney2003nonuniqueness, liu2022deterministic}. 
Nonetheless, recent sparse-data inversion theory shows that meaningful recovery is still possible with very few measurements \cite{ji2020identification, ji2021source}. 
Designing strategies that allow classical solvers to retain-or even enhance-their resolving power under severe limitations of data and aperture is therefore a critical open problem, and it is precisely the focus of the present work.
 
We reformulate the challenge as learning an interpolation operator that converts sparse measurements into a sufficiently dense data, thereby enabling standard inversion algorithms to perform effectively. Recent advances in neural-operator learning \cite{cybenko1989approximation,kovachki2023neural,lu2021learning} demonstrate that neural operator networks can learn a continuation operator, which maps a limited set of measurements to an acceptable field distribution over an aperture. 
In our framework, such a network is trained on a large ensemble of synthetic source configurations that cover the anticipated frequency range. 
During inference, the network augments any new experiment with virtual measurements that are mutually consistent and encode statistical structure extracted from the training ensemble, without altering the existing physical model.

The reconstructed field is then fed into DSM, which acts as a fast, mesh-free solver and, when desired, furnishes a high-quality initial guess for subsequent gradient-based or Bayesian refinements \cite{li2020extended, li2021quality,Qin_2023}. 
Compared with fully end-to-end approaches, this hybrid strategy preserves the interpretability and reliability of classical inversion algorithms while substantially improving accuracy relative to reconstructions obtained directly from the sparse measurements. 

In addition, we further extend the uniqueness theorem of Bao \emph{et al.} \cite{bao2010multi} for the multi-frequency ISP to finite measurable apertures. Specifically, we prove that uniqueness still holds for the Dirac-source ISP when only a finite measurable aperture is available, thereby providing a rigorous theoretical foundation for inversion algorithms that rely on partial measurement data.
For the single-source case, we derive a preliminary error estimate for the DSM, which corroborates the method’s theoretical validity and helps explain the robustness observed in our numerical experiments.

Our main contributions can be summarized as follows: 
\begin{itemize}
    \item[i)] A modular framework is proposed that incorporates a neural operator interpolator. This framework enables the transformation of sparse boundary data into a dense synthetic aperture dataset, thereby substantially augmenting the reconstruction accuracy of the DSM.

    \item[ii)] A finite-aperture uniqueness theorem for Dirac-source inverse scattering is established, which provides a rigorous theoretical underpinning for limited-view reconstruction methodologies.
      
    \item[iii)] Both a prior and a posterior error estimations for the DSM are derived in the single-source regime. These estimations not only inform the selection of sampling parameters, but also elucidate the robustness of the proposed method.
\end{itemize}  

All codes, trained networks and datasets used in this study will be released publicly upon publication, enabling complete reproduction of our results. 

The rest of the paper is organized as follows.
\secref{sec2} develops our methodology. 
In \secref{sec2.1} we formalize the inverse source problem and establish a finite-aperture uniqueness theorem for Dirac sources. 
\secref{sec2.2} revisits the direct sampling method,  derives a priori and a posteriori error estimates for the single-source case, and discusses limitations under sparse measurements. 
\secref{sec2.3} introduces the operator-learning interpolation module and the hybrid framework that couples interpolation with DSM. 
\secref{sec3} reports numerical experiments. 
\secref{sec3.1}validates two-source localization and \secref{sec3.2} extends to three sources, comparing DSM with and without operator-based interpolation. 
\secref{sec4} concludes with a discussion of implications and future directions. 
\appref{apd:1} collects technical estimates used in the DSM error analysis.

\section{Methodology}\label{sec2}

\subsection{Inverse source problem}\label{sec2.1}

Given a source distribution $F \in L^{2}(\mathbb{R}^{2})$ with $\text{supp} \, F \subset V$,  where $V$ is a bounded domain in $\mathbb{R}^{2}$. 
For solving the acoustic field $u \in H_{loc}^{1}(\mathbb{R}^{2})$, the equations it obeys are 
\begin{subequations}  
    \begin{align}  
        \Delta u({x}) + k^{2} u({x}) &= F({x}), \quad {x} \in \mathbb{R}^{2}, \label{eq:helmholtz_a} \\
        \lim_{r\to\infty} \sqrt{r} \left(\frac{\partial u}{\partial r} ({x}) - iku({x}) \right) &= 0, \quad r = |{x}|, \label{eq:helmholtz_b}  
    \end{align}  
\end{subequations}
where \eqref{eq:helmholtz_b} is the Sommerfeld radiation condition that guarantees the uniqueness of the solution $u$. 
The fundamental solution to the Helmholtz equation is given by :  
\begin{equation*}
    \Phi_{k}({x}, {y}) = \frac{i}{4}H_{0}^{(1)}(k|{x} - {y}|),  
\end{equation*}
where $H_{0}^{(1)}$ denotes the Hankel function of the zeroth order and the first kind. 
As is well-known, $\Phi_{k}({x}, {y})$ satisfies:  
\begin{equation}\label{Phik-Dirac}  
    \Delta \Phi_{k}({x}, {y}) + k^{2} \Phi_{k}({x}, {y}) = -\delta(|{x} - {y}|),
\end{equation} 
where $\delta$ is the Dirac distribution. 
Given these conditions, the solution $u$ to \eqref{eq:helmholtz_a} and \eqref{eq:helmholtz_b} can be represented as:  
\begin{equation}
    \label{eq:general_solution}
    u({x}, k) = \int_{\mathbb{R}^2} \Phi_{k}({x}, {y}) F({y}) \, \mathrm{d}y. 
\end{equation}  

In this paper, we focus on tackling the inverse source problem corresponding to the forward problem described by \eqref{eq:helmholtz_a} and \eqref{eq:helmholtz_b}, formulated as:  
\begin{equation}\label{eq:general_helmholtz}
\left\{\begin{aligned}
        \Delta u\left( {x} \right) + k^2 u\left( {x} \right) = &f\left( {x} \right), \qquad {x} \in \Omega, \\
        \mathfrak{B} \left( u\left( {x} \right) \right) = &g\left( {x} \right), \qquad  {x} \in \Gamma.
\end{aligned}
\right.
\end{equation}
Here, $k$ is the wavenumber, $\Omega \subset \mathbb{R}^2$ has a piecewise Lipchitz boundary $\Gamma$ satisfying the interior cone condition. 
The operator $\mathfrak{B}$ denotes a boundary condition applied to $\Gamma$ (e.g., Dirichlet, Neumann, or Robin condition). The source $f(x)$ is Dirac-source, i.e., 
\begin{equation}\label{eq:source_f}
f(x)=\sum^M_{m=1}\lambda_m\delta(|x-z_m|),\end{equation}
where $z_m (m=1,\cdots,M)$ are point-like scatterers, and $\lambda_m\ne 0 ( m=1,\cdots,M)$ are  scattering
strengths.
The inverse source problem can  be expressed as follows: Given the boundary measurements $g\left( {x_i} \right)$ at $x_i\in  \Gamma, i = 1, \cdots, N$, our objective is to determine the source $f({x})$ of the form \eqref{eq:source_f}. 

For $\rho>0$ and $x\in \mathbb{R}^2,$ let $B_{\rho}(x)$ denote the open disk with radius $\rho$ and center $x$, i.e. 
\begin{equation*}
    B_{\rho}(x)=\{y\in \mathbb{R}^2,|x-y|<\rho\}.
\end{equation*}
In practical inversion, considering sources with \( \mathop{\mathrm{supp}} F \subset B_{R_0} \) and \( F \in L^1(B_{R_0}) \) ,  we define the scattering operator \cite{bao2010multi}
\begin{equation*}
    \mathfrak{L}_k  \left( F \right)(x) :=\int_{B_{R_0}}{\Phi _k\left( {x},{y} \right)}F\left( {y} \right) \mathrm{d}{y}.
\end{equation*}
The ISP at a fixed frequency may be established as follows: find \textit{F} satisfies the linear equations. 
\begin{equation*}
    \mathfrak{L}_k \left( F \right)(x) =\psi(x,k), \quad x\in \Gamma,
\end{equation*}
where $\psi(x,k)$ is defined as $u(x,k)$  on the measurement curve $\Gamma$.

According to \cite[Theorem 4.2]{D.CandR.K}, due to the infinite dimension of $ L^1(B_{R_0})$, the equation $\mathfrak{L}_k \left( F \right)(x) =\psi(x,k)$ is ill-posed. 
In \cite[Theorem 3.1-3.3]{bao2010multi}, Bao \emph{et al.} study the multi-frequency inverse problem and prove the uniqueness and stability estimates under the assumption that the source term is $L^2$-integrable and full-aperture measurements.
In this paper, we depart from the framework of prior works and adopt a complex analytical approach to establish the uniqueness result for acoustic scattering problems with Dirac-source terms.

\begin{theorem}\label{thm:uniqueness}
Assume that the measurement curve $\Gamma$ satisfies $\Gamma \cap V =\emptyset$, given $u|_{\Gamma}=\psi$, for the Helmholtz equation with Dirac source terms
\begin{equation}
\Delta u + k^{2}u=\sum_{j = 1}^{m}a_{j}\delta(|x - z_{j}|),
\end{equation}
the source term is uniquely determined.
\end{theorem}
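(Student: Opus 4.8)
The plan is to reduce the claim to three ingredients: the explicit point-source representation of the radiating field, an analytic-continuation step that upgrades the trace on the arc $\Gamma$ to a trace on a closed curve surrounding $V$, and a local singularity analysis at the source locations; classical uniqueness for the exterior Dirichlet problem then finishes it.

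First I would make the forward map explicit. Since \eqref{eq:helmholtz_a}--\eqref{eq:helmholtz_b} has a unique radiating solution, represented by \eqref{eq:general_solution}, the field generated by a Dirac source $\sum_{j=1}^{m}a_j\delta(|x-z_j|)$ is the finite sum $u=\sum_{j=1}^{m}a_j\Phi_k(\cdot,z_j)$, which is real-analytic on $\mathbb{R}^2\setminus\{z_1,\dots,z_m\}$. Suppose two admissible configurations, with distinct source points in $V$ and nonzero strengths, produce the same trace $\psi$ on $\Gamma$. Merging their source sets into $\{p_1,\dots,p_N\}\subset V$ and writing $c_i$ for the net strength at $p_i$, the difference
\[
 w \;:=\; u_1-u_2 \;=\; \sum_{i=1}^{N} c_i\,\Phi_k(\cdot,p_i)
\]
is a radiating solution of $\Delta w+k^2 w=0$ on $\mathbb{R}^2\setminus\{p_1,\dots,p_N\}$, is real-analytic there, and satisfies $w|_\Gamma=0$. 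It then suffices to show $c_i=0$ for all $i$, for that forces the two lists of points-with-strengths to coincide.

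Next I would promote the data. As $\Gamma\cap V=\emptyset$ and every $p_i\in V$, the function $w$ is real-analytic in a neighborhood of $\Gamma$; parametrizing a real-analytic extension of $\Gamma$ by arc length, the one-variable real-analytic function $t\mapsto w(\gamma(t))$ vanishes identically, so $w$ vanishes on the whole analytic curve carrying $\Gamma$. Choosing this curve to be a closed curve $\widetilde\Gamma$ that encloses $V$ and avoids the $p_i$ (for circular-arc measurements, simply the full circle), we get $w|_{\widetilde\Gamma}=0$. On the unbounded component of $\mathbb{R}^2\setminus\widetilde\Gamma$ the field $w$ is a radiating solution of the homogeneous Helmholtz equation with vanishing Dirichlet data; Green's identity together with the Sommerfeld condition makes the far-field energy $\int_{|x|=R}|w|^2\,\mathrm ds$ tend to $0$, and Rellich's lemma then yields $w\equiv0$ on that exterior. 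Since $\mathbb{R}^2\setminus\{p_1,\dots,p_N\}$ is connected and $w$ is real-analytic there and vanishes on a nonempty open set, $w\equiv0$ throughout $\mathbb{R}^2\setminus\{p_1,\dots,p_N\}$.

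Finally, a singularity analysis closes the argument: near a fixed $p_{i_0}$ the remainder $\sum_{i\ne i_0}c_i\Phi_k(\cdot,p_i)$ is real-analytic, while $\Phi_k(x,p_{i_0})=\tfrac{i}{4}H_0^{(1)}(k|x-p_{i_0}|)\sim-\tfrac{1}{2\pi}\log|x-p_{i_0}|$ is unbounded as $x\to p_{i_0}$, so $w\equiv0$ near $p_{i_0}$ is impossible unless $c_{i_0}=0$; letting $i_0$ range over all indices completes the proof. I expect the main obstacle to be the promotion step: showing rigorously that Dirichlet values on a possibly very short arc can be analytically continued to a closed curve that actually encloses $V$ without meeting a source point --- this is exactly where the complex-/real-analytic structure of $\Phi_k(\cdot,p_i)$ away from its pole is indispensable, and it also tacitly uses that all sources lie on one side of the measurement curve. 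A minor but real point is to check that distinct $p_i$ with nonzero net strengths cannot have their logarithmic singularities cancel, which is immediate once the $p_i$ are kept distinct in the merging step.
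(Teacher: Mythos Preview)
Your argument follows the same skeleton as the paper's proof --- form the difference of two candidate radiated fields, propagate the vanishing trace from $\Gamma$ to the full punctured plane, then read off the source data from the local logarithmic singularities of $\Phi_k$ --- but you handle the middle step differently and, in fact, more carefully. The paper simply asserts that because the analytic function $u_1-u_2$ vanishes on $\Gamma$ (which ``has an accumulation point''), unique continuation \`a la H\"ormander forces it to vanish on the whole punctured domain; this is a loose appeal, since in two real variables vanishing on a one-dimensional arc without the normal derivative does not by itself trigger Holmgren or the identity theorem. Your route --- continue the trace analytically in one variable along the curve to a closed $\tilde\Gamma$, invoke uniqueness of the radiating exterior Dirichlet problem (Green's identity plus Sommerfeld plus Rellich) to kill $w$ outside $\tilde\Gamma$, and then use 2D real-analytic continuation into the connected punctured interior --- makes each step explicit. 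The cost is the geometric hypothesis you already flag: the real-analytic extension of $\Gamma$ must close up, enclose $V$, and avoid the sources, which is automatic for the circular arcs used throughout the paper but would require care for a general measurement curve. Your merging-of-points device also lets you dispatch locations and strengths in one stroke, whereas the paper first matches the $z_j$ via the blow-up argument and then integrates against test functions $\phi_\ell$ with $\phi_\ell(y_j)=\delta_{\ell j}$ to extract the coefficients.
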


\begin{proof}
Assume that 
\[\displaystyle f_{1}(x)=\sum_{q = 1}^{m}\alpha_{q}\delta(|x - x_{q}|),\qquad \alpha_q\ne 0,\quad  q=1,\cdots,m,\]
and 
\[
f_{2}(x)=\sum_{j = 1}^{n}\beta_{j}\delta(|x - y_{j}|),\qquad \beta_j\ne 0,\quad j=1,\cdots,n,
\]
are two groups of source terms. From equation \eqref{Phik-Dirac} and the superposition principle, we know that their corresponding solutions are, respectively, 
\begin{equation}
    u_{1}(x)=-\sum_{q = 1}^{m}\alpha_{q}\Phi_{k}(x,x_{q}),\quad u_{2}(x)=-\sum_{j = 1}^{n}\beta_{j}\Phi_{k}(x,y_{j}), 
\end{equation}
where $\Phi_{k}(x,x_{q})=\frac{i}{4}H_{0}^{(1)}(k|x - x_{q}|)$.

For the partial aperture measurement curve $\Gamma$ ( $\Gamma\cap V = \emptyset$), suppose that source terms $f_1$ and $f_2$ produce the same measurement data in $\Gamma$, i.e., $u_{1}$ and $u_{2}$ satisfy the conditions 
\begin{equation}
    \psi_{1}|_{\Gamma}=\psi_{2}|_{\Gamma}.
\end{equation}
Let $u=\psi_{1}-\psi_{2}$, then $\varphi$ satisfies the following Helmholtz equation
\begin{equation}\label{imhomogeneous}
\Delta u + k^{2}u=f_{1}-f_{2}.
\end{equation}
When we consider the region outside  $V$,  then the equation \eqref{imhomogeneous} is a homogeneous Helmholtz equation
\begin{equation}\label{homogeneous Helmholtz equation}
\Delta u + k^{2}u = 0,\qquad  x\in \mathbb{R}^2/V.
\end{equation}
Notice that $u$ can be expressed as
\begin{equation}\label{varphi function} 
u=u_{1}-u_{2}=\sum_{j = 1}^{n}\beta_{j}\Phi_{k}(x,y_{j})-\sum_{q = 1}^{m}\alpha_{q}\Phi_{k}(x,x_{q}).
\end{equation}
We can find a bounded domain $
\tilde{\Omega}\supset\Gamma, \tilde{\Omega}\supset V$, and set  
\[
\Upsilon 
= \tilde{\Omega} \setminus\{x_{1},\cdots,x_{m},y_{1},\cdots,y_{n}\}.\]
It is easily to check that $u$ is an analytic function on $\Upsilon$. Since $u\equiv0$ on the curve $\Gamma$, there exists an accumulation point on $\Gamma$. 
From unique continuation \cite{LHormander}, we deduce that $u\equiv0$ in $\Upsilon $, and the following formula holds 
\begin{equation}\label{2.11}
\sum_{j = 1}^{n}\beta_{j}\Phi_{k}(x,y_{j})-\sum_{q = 1}^{m}\alpha_{q}\Phi_{k}(x,x_{q}) = 0,\quad x\in \Upsilon.
\end{equation}
We consider the cases $x\rightarrow y_{s},s=1,\cdots,n$. 
To ensure that the left-hand side of \eqref{2.11} is zero, we must have $m = n$ and $x_{q}=y_{s}$; otherwise, there exists $q$ such that $x_q\ne y_s$ for $s=1,\cdots,n $, and  
\begin{equation}\label{2.12}
\sum_{j=1}^n{\beta _j}\Phi _k(x,y_j)-\sum_{q=1}^m{\alpha _q}\Phi _k(x,x_q)\rightarrow \infty ,\quad x\rightarrow y_s,
\end{equation}
a contradiction will be derived. 

Taking $\Delta+k^2$  on both sides of the equation\eqref{2.11}, and then we multiply this resulting expression by a test function $\phi_\ell \in C^0(\tilde{\Omega})$ and then integrate over the domain  $\tilde{\Omega}$, which yields: 
\begin{equation}\begin{aligned}
    \int_{\tilde{\Omega}}&{\left( \sum_{j=1}^n{\left( \beta _j-\alpha _j \right) \left( \Delta +k^2 \right) \Phi _k(x,y_j)} \right)}\phi_\ell \left( x \right) \mathrm{d}x\\
    =&\sum_{j=1}^n{\left( \alpha _j-\beta _j \right) \phi_\ell \left( y_j \right)}=0,\qquad \ell=1, \cdots, n.
\end{aligned}
\end{equation}
By setting   $\phi_\ell(y_j)=\delta_{\ell j},$ we can deduce that 
$\alpha_j=\beta_j,j=1,\cdots,n$.
\end{proof}

\subsection{Direct sampling method}\label{sec2.2}

Given the measured near-field data, Li \emph{et al.} propose a DSM to reconstruct the locations of point-like  sources \cite{li2021quality}. The DSM proposed is to reconstruct point-like scatterers using near-field data from all directions by a single incident plane wave in \cite{ito2012direct}. 
Now, for the readers' convenience, we recall the DSM with partial data following \cite{li2021quality}. Since the DSM-based derivation below assumes a simple closed measurement curve, we introduce a new set of symbols to distinguish this setting from the notation used previously.

Let $\tilde{V}$ be the sampling domain such that $\tilde{V} \subsetneq \tilde{D}$ and $n$ be the unit outward normal to $\partial \tilde{D}$. 
Denote by $\mathcal{A} :=\left\{ k_i \right\} _{i=1}^{N}$ a finite of  wavenumbers, $\tilde{\Gamma}=\partial \tilde{D} $ is a closed curve external to $\tilde{V}$, and $\tilde{\Gamma} \cap \tilde{V}=\emptyset$, we denote their distance as $\text{dist}(\tilde{\Gamma} ,\tilde{V}):= \inf_{x\in \tilde{\Gamma},z\in \tilde{V}} |x-z|$.
For near-field data,  we first define the near-field function
\begin{align}
    I\left( z_p \right) &=\sum_{k_i\in \mathcal{A}}^{}{\int_{\tilde{\Gamma}}{\int_{\tilde{V}}{\Phi _k\left( x,y \right) F\left( y \right) \mathrm{d}y}}}\bar{\Phi}_k\left( x,z_p \right) \mathrm{d}s\left( x \right) \nonumber
    \\
    &=\sum_{k_i\in \mathcal{A}}^{}{\int_{\tilde{V}}{\int_{\tilde{\Gamma}}{\Phi _k\left( x,y \right)}\bar{\Phi}_k\left( x,z_p \right) \mathrm{d}s\left( x \right) F\left( y \right) \mathrm{d}y}},\label{near-field function}
\end{align}
due to the equation
\eqref{Phik-Dirac}, it can be obtain that
\begin{align*}
    \int_{\tilde{D}}{\left( \Delta \Phi _{k}\left( x,y \right) +k^2\Phi _{k}\left( x,y \right) \right)}\bar{\Phi}_{k}\left( x,z_p \right) \mathrm{d}x=-\bar{\Phi}_{k}\left( y,z_p \right) ,
    \\
    \int_{\tilde{D}}{\left( \Delta \bar{\Phi}_{k}\left( x,z_p \right) +k^2\bar{\Phi}_{k}\left( x,z_p \right) \right)}\Phi _{k}\left( x,y \right) \mathrm{d}x=-\Phi _{k}\left( y,z_p \right).
\end{align*}
Using Green's formula and the Sommerfeld radiation condition, we derive 
\begin{align*}
    \Phi _k\left( y,z_p \right) -\bar{\Phi}_k\left( y,z_p \right) 
    &=\int_{\tilde{\Gamma}}{\left\{ \bar{\Phi}_{k}\left( x,z_p \right) \frac{\partial \Phi _{k}\left( x,y \right)}{\partial n}-\Phi _{k}\left( x,y \right) \frac{\partial \bar{\Phi}_{k}\left( x,z_p \right)}{\partial n} \right\}}\mathrm{d}s\left( x \right) 
    \\
    &=2ik\int_{\tilde{\Gamma}}{\bar{\Phi}_k\left( x,z_p \right) \Phi _k\left( x,y \right)}\mathrm{d}s\left( x \right) \\
    & \qquad+o\left( |x|^{-\frac{1}{2}} \right) \left( \int_{\tilde{\Gamma}}{\left\{ \bar{\Phi}_k\left( x,z_p \right) -\Phi _k\left( x,y \right) \right\}}\mathrm{d}s\left( x \right) \right) 
\\
&=2ik\int_{\tilde{\Gamma}}{\bar{\Phi}_k\left( x,z_p \right) \Phi _k\left( x,y \right)}\mathrm{d}s\left( x \right) +o\left( |x|^{-\frac{1}{2}} \right) O\left( |x|^{-\frac{1}{2}} \right) O\left( |x| \right) 
\\
    &= 2ik\int_{\tilde{\Gamma}}{\bar{\Phi}_{k}\left( x,z_p \right) \Phi _{k}\left( x,y \right)}\mathrm{d}s\left( x \right)+o(1) \quad (|x| \rightarrow \infty).
\end{align*}
Denote 
\begin{equation*}
    \Im \left( \Phi _k\left( y,z_p \right) \right) = k\int_{\tilde{\Gamma}}{\bar{\Phi}_k\left( x,z_p \right) \Phi _k\left( x,y \right)}\mathrm{d}s\left( x \right)+o(1) \quad (|x| \rightarrow \infty),
\end{equation*}
where $\Im(\cdot)$ denotes the imaginary part. 
For $y,z_p\in \tilde{V}$, thus  we define the kernel function  about \eqref{near-field function}
\begin{equation*}
    H_k(y,z_p):=
    \int_{\tilde{\Gamma}}{\bar{\Phi}_k\left( x,z_p \right) \Phi _k\left( x,y \right)}\mathrm{d}s\left( x \right)= \frac{1}{4k}J_0\left( k|y-z_p| \right)+o(1), \quad (|x| \rightarrow \infty), k\in \mathcal{A},
\end{equation*}
where $J_0$ is the zeroth order Bessel function. 
From the asymptotic property of $J_0$, it has
\begin{align*}
    \underset{t\in \mathbb{R}}{\mathrm{argmax}}J_0\left( t \right) =0,  \qquad J_0\left( t \right) =\frac{\sin t+\cos t}{\sqrt{\pi t}}\left\{ 1+O\left( \frac{1}{t} \right) \right\} , \quad t\rightarrow \infty,
\end{align*}
this implies $I(z_p)$ decays $z_p \rightarrow \infty$, and  $I(z_p)$  leads to an indicator about the near-field data
\begin{align}\label{IDSM}
    I_{DSM}\left( z_p \right) =\frac{\left| \sum_{k_i\in \mathcal{A}}^{}{\langle u\left( x,k_i \right) ,\Phi _{k_i}\left( x,z_p \right) \rangle _{\mathrm{L}^2\left( \tilde{\Gamma} \right)}} \right|}{\sum_{k_i\in \mathcal{A}}^{}{\left\| u\left( x,k_i \right) \right\| _{\mathrm{L}^2\left( \tilde{\Gamma}\right)}\left\| \Phi _{k_i}\left( x,z_p \right) \right\| _{\mathrm{L}^2\left( \tilde{\Gamma} \right)}}},\qquad \forall z_p\in \tilde{V},
\end{align}
where the inner product is defined as 
\begin{equation*}
    \langle u\left( x,k_i \right) ,\Phi _{k_i}\left( x,z_p \right) \rangle _{\mathrm{L}^2\left( \tilde{\Gamma} \right)}=\int_{\tilde{\Gamma}}{u\left( x,k_i \right)}\bar{\Phi}_{k_i}\left( x,z_p \right) \mathrm{d}s\left( x \right) .
\end{equation*}
Finally we calculate $I_{DSM}(z_p), z_p\in \tilde{V}$, and find the local maximizers $z^{DSM}$ of $I_{DSM}(z_p)$, which provides the rough locations of the sources.

Subsequently, we conduct an error analysis of the DSM for a single-Dirac-source case $ f(x)=\lambda_1\delta(x-z_1), \lambda_1>0, z_1\in \tilde{V}$. 
\begin{theorem}\label{prior error estimation}
    For a specific frequency $k>0$ and single points $z_1$, if $ k\Theta \ge 15$, we have 
    \begin{align}
        |z_1-z^{DSM}|<\frac{1}{15k},
    \end{align}
    where $z^{DSM}$ is a  local maximizer of  $I_{DSM}(z_p)$,  $\Theta = \text{dist}(\tilde{\Gamma}_1,\tilde{V})$, and
    $\tilde{\Gamma}_1=\{(R\cos t, R\sin t)|t\in [-\theta/2,\theta/2] ,\theta \in (0, 2\pi) \}.$
\end{theorem}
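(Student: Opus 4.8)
The plan is to exploit the fact that, for a single source, the DSM indicator reduces to a \emph{normalized correlation} of Hankel kernels over the aperture, to identify $z_1$ as its unique, nondegenerate global maximizer, and then to bound the displacement of a nearby local maximizer by a first-order (critical-point) perturbation argument in which the hypothesis $k\Theta \ge 15$ makes the remainder terms negligible. First I would substitute $f = \lambda_1\delta(\cdot - z_1)$ into the representation \eqref{eq:general_solution} to get $u(x,k) = -\lambda_1\Phi_k(x,z_1)$ (cf.\ the solutions $u_1,u_2$ in the proof of Theorem~\ref{thm:uniqueness}); since $\lambda_1>0$ cancels in the normalization \eqref{IDSM},
\[
I_{DSM}(z_p) = \frac{\bigl|\langle \Phi_k(\cdot,z_1),\Phi_k(\cdot,z_p)\rangle_{L^2(\tilde{\Gamma}_1)}\bigr|}{\|\Phi_k(\cdot,z_1)\|_{L^2(\tilde{\Gamma}_1)}\,\|\Phi_k(\cdot,z_p)\|_{L^2(\tilde{\Gamma}_1)}} .
\]
By Cauchy--Schwarz this is $\le 1$ on $\tilde{V}$ with $I_{DSM}(z_1)=1$, and equality for some $z_p\ne z_1$ would force $\Phi_k(\cdot,z_p)$ and $\Phi_k(\cdot,z_1)$ to be proportional on the arc $\tilde{\Gamma}_1$; by the unique-continuation argument already used in Theorem~\ref{thm:uniqueness} this proportionality would extend off $\tilde{\Gamma}_1$ and contradict the distinct singularities of the two kernels. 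Hence $z_1$ is the unique global maximizer of $I_{DSM}$.

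Next I would insert the large-argument expansion $H_0^{(1)}(s)=\sqrt{2/(\pi s)}\,e^{i(s-\pi/4)}(1+O(1/s))$ together with $|x-z| = R - \hat{x}\cdot z + O(R^{-1})$ for $x = R(\cos t,\sin t)\in\tilde{\Gamma}_1$ and $z\in\tilde{V}$, and track the remainders with the quantitative estimates collected in \appref{apd:1}. This gives, uniformly for $z_p\in\tilde{V}$,
\[
I_{DSM}(z_p) = g(z_p) + E(z_p), \qquad g(z_p):=\frac{1}{\theta}\Bigl|\int_{-\theta/2}^{\theta/2} e^{\,ik(z_p-z_1)\cdot(\cos t,\sin t)}\,\mathrm{d}t\Bigr|,
\]
where $E$ and its gradient are bounded by quantities that decay as $k\Theta\to\infty$; under the standing assumption $k\Theta\ge 15$ they are small enough for the perturbation estimate below to close, and their explicit size is exactly what the appendix supplies.

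Then I would analyze the explicit kernel $g$. Writing $g = |G|$ with $G(z_p)=\tfrac1\theta\int_{-\theta/2}^{\theta/2}e^{ik(z_p-z_1)\cdot\hat{x}(t)}\,\mathrm{d}t$ entire and equal to $1$ at $z_p=z_1$, a Taylor expansion yields $g(z_p) = 1 - \tfrac12 k^2 (z_p-z_1)^{\!\top}\Sigma_\theta\,(z_p-z_1) + O\bigl(|z_p-z_1|^4\bigr)$, where $\Sigma_\theta\succ 0$ is the covariance matrix of the unit vector $(\cos t,\sin t)$ over $t\in[-\theta/2,\theta/2]$; thus $g$ has a strict, nondegenerate maximum at $z_1$, and on a ball $B_{\delta_\theta/k}(z_1)$ one has $\nabla^2 g\preceq -c_\theta k^2 I$ with $c_\theta := \tfrac12\lambda_{\min}(\Sigma_\theta)>0$, hence $|\nabla g(z_p)|\ge c_\theta k^2|z_p-z_1|$ there. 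Finally, since $z_p\mapsto I_{DSM}(z_p)$ is real-analytic on $\tilde{V}$ (numerator and denominator are smooth and the denominator never vanishes, $\tilde{\Gamma}_1$ and $\tilde{V}$ being separated), a local maximizer $z^{DSM}$ lying in this principal lobe satisfies $\nabla I_{DSM}(z^{DSM})=0$, so $\nabla g(z^{DSM}) = -\nabla E(z^{DSM})$ and
\[
c_\theta k^2\,|z^{DSM}-z_1| \le |\nabla g(z^{DSM})| = |\nabla E(z^{DSM})| ;
\]
substituting the explicit bound on $\nabla E$ from \appref{apd:1} and using $k\Theta\ge 15$ delivers $|z_1-z^{DSM}| < \tfrac{1}{15k}$.

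The step I expect to carry the real difficulty is the quantitative control of the remainder $E$ in the second step, together with the interplay between $E$ and the curvature of $g$ in the \emph{radial} direction. For a narrow aperture the leading kernel $g$ is almost flat along the ray from the centre of $\tilde{\Gamma}_1$ through $z_1$ — its curvature there degenerates like $\theta^4$ — so the localization in that direction is genuinely produced by $E$ rather than by $g$; making the bounds on $E$ and $\nabla E$ explicit, uniform over $\tilde{V}$, and sharp enough in their dependence on $k\Theta$ that the final constant comes out below $1/15$ is precisely where the work lies (and is the purpose of \appref{apd:1}). A secondary point is that $g$ also possesses outer side-lobe critical points, so the statement is to be read for the maximizer in the principal lobe, which is guaranteed to lie in $\tilde{V}$ when the sampling domain is no larger than that lobe.
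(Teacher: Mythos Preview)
Your approach is genuinely different from the paper's, and the mechanism you invoke for the displacement of $z^{DSM}$ from $z_1$ is not the one the paper analyses. Your Cauchy--Schwarz step is correct and already shows that the \emph{exact} indicator $I_{DSM}$ attains its unique global maximum at $z_p=z_1$; in your framework the deviation $|z_1-z^{DSM}|$ therefore comes entirely from the far-field remainder $E$, and you appeal to \appref{apd:1} for explicit bounds on $\nabla E$. But \appref{apd:1} proves nothing of the sort: it establishes the two sign inequalities $g'(0)>0$ and $g'(1/(15k))<0$ for the paper's specific one-variable function
\[
g(y)=\frac{\lambda}{k}\,\frac{|J_0(ky)|}{\sqrt{J_0(ky+k\xi)^2+Y_0(ky+k\xi)^2}},\qquad y=|z_1-z_p|,\ \ \xi=\inf_{x_n\in\tilde\Gamma_1}|x_n-z_1|,
\]
which is neither your $g$ nor your $E$. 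So your plan is not connected to the only quantitative input the paper supplies, and you have no route to the constant $1/15$.

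The paper's argument runs along a different axis. It replaces the numerator of $I_{DSM}$ by the closed-curve kernel $\tfrac{\lambda}{4k}J_0(k|z_1-z_p|)$ and the normalisation $\|\Phi_k(\cdot,z_p)\|$ by the single value $|\Phi_k(ky+k\xi)|=|H_0^{(1)}(ky+k\xi)|$, thereby collapsing the problem to the one-variable $g(y)$ above. The crucial observation is that this approximated normalisation is strictly decreasing in $y$, so $g'(0)>0$ (the numerator $|J_0|$ is flat at $y=0$ while the denominator drops), i.e.\ the maximiser of $g$ is \emph{not} at $y=0$; one then verifies $g'(1/(15k))<0$ via explicit Bessel-function identities and invokes the intermediate value theorem to trap a critical point in $(0,1/(15k))$. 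Thus the displacement is generated by the normalisation term, not by a Hankel-asymptotic remainder, and the constant $1/15$ falls out of direct calculus on $g'$ rather than from any Hessian perturbation bound. Your scheme, besides missing this mechanism, also runs into the very radial degeneracy you flag: since the curvature of your leading kernel along the line of sight scales like $\theta^4$, the inequality $c_\theta k^2|z^{DSM}-z_1|\le |\nabla E(z^{DSM})|$ cannot produce an aperture-independent bound such as $1/(15k)$.
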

\begin{proof}
First, we make a preliminary approximation for the formula \eqref{IDSM}
\begin{equation*}
    I_{DSM}\left( z_p \right) \approx \frac{\displaystyle\left| \frac{\lambda}{4k}J_0\left( k|z_1-z_p| \right) \right|}{\theta R\left(\displaystyle \frac{1}{M}\sum_{n=1}^M{|u\left( x_n,k \right) |^2} \right) ^{1/2}\left( \displaystyle\frac{1}{M}\sum_{n=1}^M{|\Phi _k\left( x_n,z_p \right) |^2} \right) ^{1/2}},
\end{equation*}
where the measurement locations $x_n\in \tilde{\Gamma}_1 ,(n=1,\cdots,M)$ are constrained by the \emph{far-field condition} $R>>d(\tilde{V})$, with the diameter of set $\tilde{V}$ defined as $d( \tilde{V} ) :=\underset{x_1,x_2\in \tilde{V}}{\mathrm{sup}}|x_1-x_2|$. 
Let $y=|z_1-z_p|,$ so it can be approximated 
\begin{align*}
    \theta R\left( \frac{1}{M}\sum_{n=1}^M{|u\left( x_n,k \right) |^2} \right) ^{1/2}I_{DSM}\left( z_p \right) 
    \approx & g\left( y \right) =\frac{\left| \frac{\lambda}{4k}J_0\left( ky \right) \right|}{|\Phi _k\left( ky+k\xi \right) |}\\
    =& \frac{\lambda}{k}\frac{\left| J_0\left( ky \right) \right|}{\sqrt{J_0\left( ky+k\xi \right) ^2+Y_0\left( ky+k\xi \right) ^2}},
\end{align*}
where $\xi =\underset{x_n\in \tilde{\Gamma}_1}{\mathrm{inf}}|x_n-z_1|\ge \Theta$. To find  the local maximizer of  $I_{DSM}$, we  differentiate  $g(y)$ and obtain
\begin{align}\label{Dg}
    g'\left( y \right) =&-\lambda J_1(ky)\frac{\text{sgn}\left( J_0(ky) \right)}{|H_{0}^{1}\left( ky+k\xi\right) |}+\lambda\frac{\text{sgn} \left( J_0(ky) \right) J_0(ky)}{|H_{0}^{1}\left( ky+k\xi \right) |^3}J_0\left( ky+k\xi \right) J_1\left( ky+k\xi \right) \nonumber \\ 
    &+\lambda\frac{\text{sgn} \left( J_0(ky) \right) J_0(ky)}{|H_{0}^{1}\left( ky+k\xi \right) |^3}Y_0\left(ky+k\xi \right) Y_1\left(ky+k\xi\right) .
\end{align}
Given $g'\left( 0 \right)>0, g'(1/(15k))<0, k\xi\ge k\Theta \ge 15$ (for the proof, see \appref{apd:1}), then $g'(y)=0$ has a root in  $(0, \frac{1}{15k})$. 
Therefore, we have completed the proof.
\end{proof}

\begin{remark}
    When $k\Theta\ge15$, the above theorem can provide a prior error estimation. In fact, after determining $\xi$, we set $x_0$ to satisfy the equation   $g'(x_0)=0$, since  $g(y)$ increases monotonically  in $(0, x_0)$ , which gives an a posterior error estimation about DSM in the single point source case.
\end{remark}

We use the following example to verify the validity of our error estimation.
\begin{example}\label{singleDiracradiator}
    The domain is fixed as $V=[-2,2]^2$. A single Dirac radiator is placed at $z_1=(1,0)$ with amplitude $\lambda=5$, and the measurement data are collected on the  finite-aperture  fan-shaped surface with a fixed radius
    \begin{equation*}
        \Gamma_n = \{ x|x=(R\cos \theta_n,R\sin \theta_n)  \}, \quad R=7,\theta_n\in[-\pi/n,\pi/n],\quad n=2,3,4,
    \end{equation*}
    with $M+1=51$ sensors,  they are uniformly distributed  along the arc of the fan-shaped aperture,  as defined by its coordinates:
    \begin{equation*}
        x_{n}^{\left( i \right)}=\left( R\cos \frac{2n\pi}{Mi},R\sin \frac{2n\pi}{Mi} \right) ,\qquad n=-\frac{M}{2},\cdots ,\frac{M}{2},\quad i=2,3,4.
    \end{equation*}
    Therefore, the smallest sensor-source separation is  
    \begin{equation*}
       \Theta_i=7-2\sqrt{2},\quad \xi_i=6,\quad i=2,3,4.
    \end{equation*}
    Synthetic measurements are generated at wavenumber $k=4$ and notice that  
    $k\Theta_i\approx 16.68\ge 15$, from  Theorem \ref{prior error estimation}, the prior error estimation is
        \begin{align}
        |z_1-z^{DSM}|<\frac{1}{60}\approx1.66\times10^{-2}.
    \end{align}
    In addition,  the a posteriori error of the DSM indicator  is calculated as $x_0=1.04\times10^{-3}$ via the bisection method.
    The reconstruction region is discretized into a uniform grid with spacing  $h=0.04$, and then $5\%$ Gaussian random noise is added to the measurements 
    \begin{equation*}
        \hat{u}\left(    \mathbf x^{\left( i \right)},k \right) :=u\left(    \mathbf x^{\left( i \right)},k \right) +Z_1,\quad \mathbf x^{(i)}=(x^{(i)}_{-M/2},\cdots,x^{(i)}_{M/2}),\qquad i=2,3,4,
    \end{equation*}
    where $Z_1\sim \mathcal{N}_{\mathbb{C}} \left( 0,0.05^2|u|^2 \right)$. \(\mathcal{N}_\mathbb{C}(\mu,\sigma^2)\) is a complex normal distribution with mean \(\mu\) and variance  \(\sigma^2\).
\end{example}

Figure \ref{fig:error_analysis_demo} displays the DSM indicator computed for each finite-aperture configuration. 
The indicator attains its global peak at $\boldsymbol z^{{DSM}}_n=(1,0),n=2,3,4$, recovering the true source location  within the grid resolution, this result indicates that   the measurement angle doesn't degrade the localization accuracy of the DSM.
We note that if the extreme point exceeds the grid resolution  ($\delta>h$),  the exact source location may not be  resolved. 
The single-source case confirms that the direct sampling method is both robust and effective . Furthermore, we can also observe that the number of sensors   $M\ge 5$ does not affect the positioning effect. 

\begin{figure}[htb]
    \centering
    \includegraphics[width=0.95\textwidth]{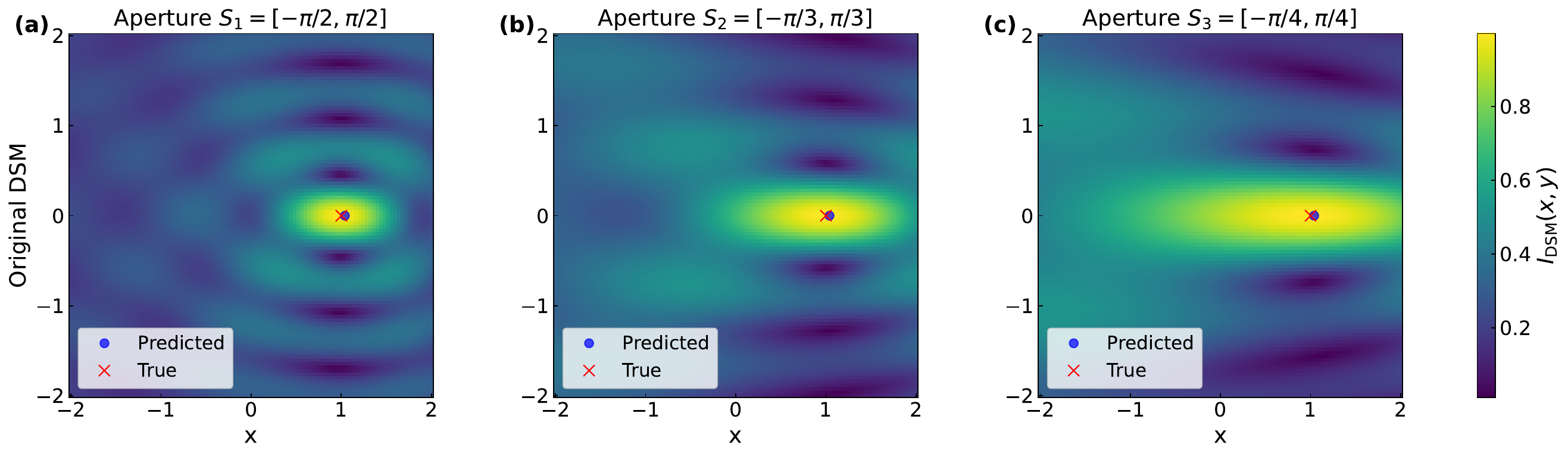}
    \caption{
        \textbf{DSM indicator for a single source (Example \ref{singleDiracradiator}).}
        Maps of $I_{DSM}(x)$ for apertures 
        (a) $S_1 = [-\pi/2,\pi/2]$,
        (b) $S_2 = [-\pi/3,\pi/3]$ and
        (c) $S_3 = [-\pi/4,\pi/4]$ at $k=4$. 
        Red crosses: true source; blue circles: DSM peaks. 
    }
    \label{fig:error_analysis_demo}
\end{figure}

We now turn to the more challenging task of localizing multiple point sources with the DSM. 
The previous section demonstrated that sparse, single-frequency measurements are already adequate for accurate single-source localization; however, the same strategy degrades noticeably when several sources are present. 
For this reason, most DSM variants (see, for example, \cite{li2021quality,liu2022deterministic,zhang2018locating}) rely on measurements acquired at multiple wavenumbers to stabilize the reconstruction. 
To quantify the contribution of frequency diversity, we examine how the cardinality of the wavenumber set $\mathcal{A}$  influences the localization performance in the following example.

\begin{example} \label{exa:10_measurements}
In this example, three point sources ($N=3$) with  identical magnitude $\lambda_j = 6$ ($j = 1,2,3$) are placed at
    \begin{equation*}
        z_1 = (-1.40,\, 1.05), \quad
        z_2 = (0.16,\, 1.56), \quad
        z_3 = (\,1.97,\,-0.37),
    \end{equation*}
    all measured in the same non-dimensional units as the wavelength.
    
    The scattered field is collected on a partial measurement curve 
    \begin{equation*}
        \Gamma = \bigl\{\, (R\cos\theta,\;R\sin\theta)\; \bigl| \; R = 6.5,\; \theta\in[-\frac{\pi}{2},\,\frac{\pi}{2}] \bigr\},
    \end{equation*}
    i.e. a semicircle with radius $6.5$ centered at the origin, which spans an aperture from $-\pi/2$ to $\pi/2$. 
    We sample $M = 10$ equal-angle measurements on $\Gamma$, creating a representative sparse configuration.
    
    We define six monotonically increases wavenumber sets
    \begin{equation*}
        \mathcal{A}_j=\left\{ x\,|\,x=4+\ell,\, \ell,j\in \mathbb{N} ,\, 0\leq\ell<j \right\} ,\qquad j=1,\cdots ,6,
    \end{equation*}
    so that frequencies are removed one by one while all other parameters remain fixed. 
    The DSM indicator $I_{{DSM}}(x)$ is evaluated on a uniform Cartesian grid over the domain $V = [-2, 2]\times[-2,2]$, which fully encloses the true sources.
\end{example}

As shown in Figure \ref{fig:sensor_10_change_wavenumber}, as the number of available wavenumbers is reduced, the peaks of DSM indicator exhibit increasing deviations from the true source locations. 
When only a single wavenumber ($k=4$) is available, although selecting the three highest peaks can mitigate the risk of completely missing all sources, spurious peaks frequently attain indicator values comparable to, or even higher than, those of the true sources, complicating reliable source identification.

\begin{figure}[htb]
    \centering
    \includegraphics[width=0.95\textwidth]{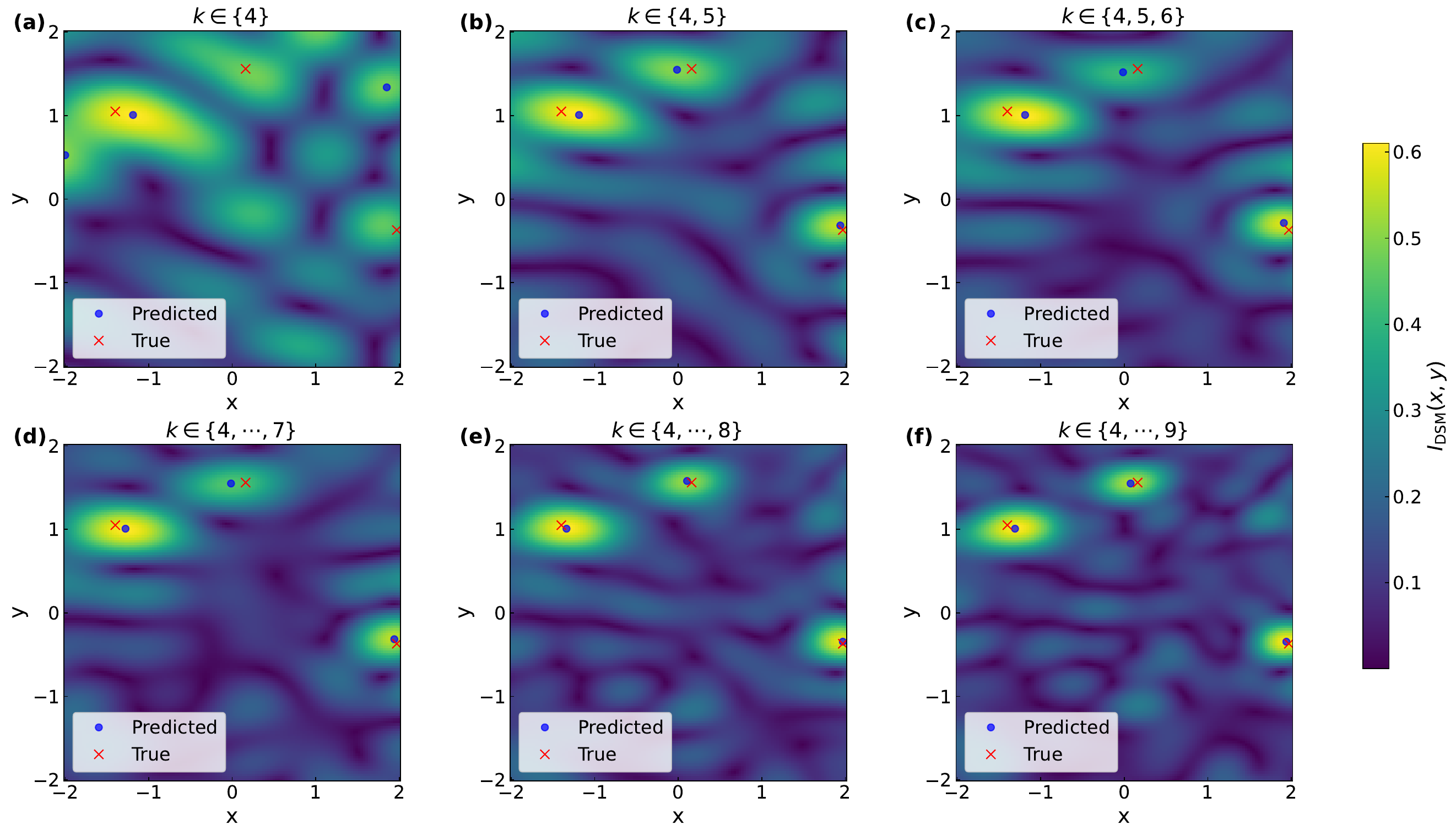} 
    \caption{
        \textbf{Three-source DSM with sparse angles (Example  \ref{exa:10_measurements}, $M=10$)}. Indicator maps for wavenumber sets (a) $\mathcal{A}_1$ to (f) $\mathcal{A}_6$. Red crosses: truth; blue circles: detected peaks. As the number of wavenumbers decreases, peaks drift away from the true locations.
    }
\label{fig:sensor_10_change_wavenumber}
\end{figure}

To examine the impact of sampling density on source localization, we consider another example in which only the number of measurement points is changed, while all other parameters are held fixed.

\begin{example} \label{exa:128_measurements}
    We repeat Example \ref{exa:10_measurements} with the number of measurement angles increased from $M = 10$ to $M =128$. 
    All other parameters remain unchanged.
\end{example}

Figure \ref{fig:sensor_128_change_wavenumber} displays DSM indicator maps reconstructed from $M=128$ measurement angles in Example \ref{exa:128_measurements}. 
In comparison with the 10-angle results reported in Example \ref{exa:10_measurements}, two systematic improvements are evident.
For every set $\mathcal A_j$ ($j=1,\dots,6$), the indicator maxima lie closer to the true source coordinates, and spurious extrema are suppressed. 
The single-wavenumber case $k=4$, which failed to yield correct location in Example \ref{exa:10_measurements}, now recovers all three sources.
The mean absolute localization error
\begin{equation}
    \varepsilon_j
    \;=\;
    \frac{1}{N} \sum_{i=1}^{N}
    \bigl\|\hat{\mathbf x}^{\,(j)}_i-\mathbf x_i\bigr\|_2,
    \qquad
    j=1,\dots,6,
    \label{eq:mae}
\end{equation}
is summarized in Table \ref{tab:mae_10_128}. 
Moving from a $10$-angle to a $128$-angle view reduces $\varepsilon_j$ by approximately one order of magnitude; the mean absolute error remains below $0.155$ even for $\mathcal A_1=\{4\}$.

\begin{figure}[htb]
    \centering
    \includegraphics[width=0.95\textwidth]{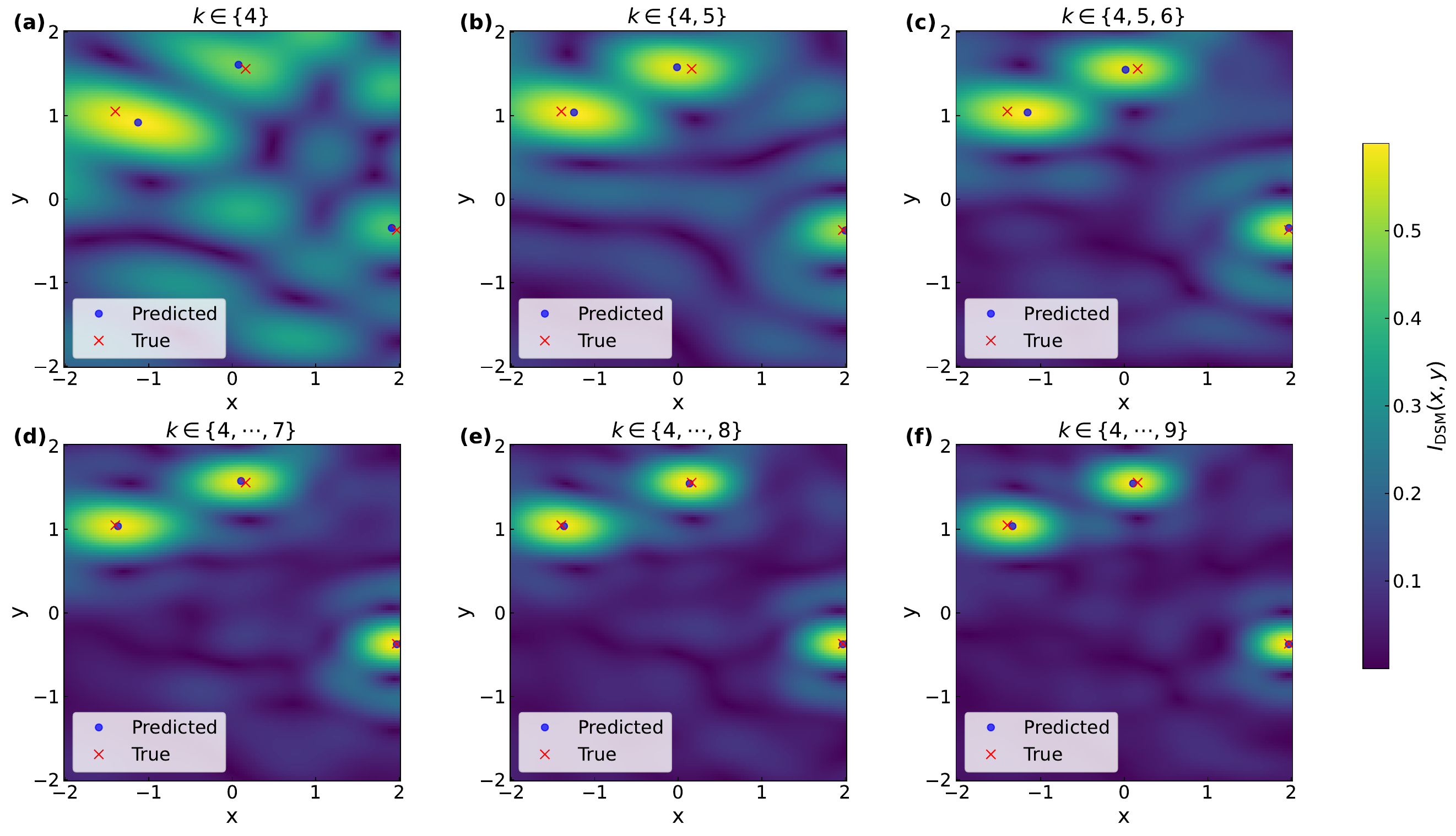}
    \caption{
        \textbf{Three-source DSM with dense angles (Example \ref{exa:128_measurements}, $M=128$).} Indicator maps for wavenumber sets (a) $\mathcal{A}_1$ to (f) $\mathcal{A}_6$. Red crosses: truth; blue circles: detected peaks. Dense angular sampling brings peaks closer to the ground truth and suppresses spurious extrema.
    }
    \label{fig:sensor_128_change_wavenumber}
\end{figure}


\begin{table}[htb]
    \caption{
        \textbf{Mean absolute error of DSM.} Mean absolute localization error $\varepsilon_j$ for the six wavenumber sets in Example \ref{exa:10_measurements} ($M=10$) and Example \ref{exa:128_measurements} ($M=128$).
    }
    \label{tab:mae_10_128}
    \centering
    \begin{tabular}{ccccccc}
        \toprule
        & $\mathcal A_1$ & $\mathcal A_2$ & $\mathcal A_3$
        & $\mathcal A_4$ & $\mathcal A_5$ & $\mathcal A_6$\\
        \midrule
        $M = 10$ & 1.322 & 0.158 & 0.150 & 0.125 & 0.065 & 0.061\\
        $M = 128$ & 0.155 & 0.124 & 0.122 & 0.033 & 0.024 & 0.028\\
        \bottomrule
    \end{tabular}
\end{table}

When measurements at multiple wavenumbers are available, the enhanced stability we observe is consistent with Theorem \ref{thm:uniqueness}, which guarantees uniqueness once the wavenumber set size $|\mathcal A_j|$ is sufficiently large. 
Remarkably, dense angular sampling improves the reconstruction accuracy across all wavenumber set, and the improvement is particularly pronounced in the single-wavenumber case, indicating that high measurement density can compensate for sparsity.

Taken together, these observation motivate a practical design principle for limited aperture inverse source problem: when the aperture cannot be widened, increasing the sampling density along the available curve can still ensure high-fidelity localization in both multi- and single-wavenumber settings.
To explore the mechanism behind this improvement, Figure \ref{fig:compare_sensor_10_128} plots the measured field values on the measurement curve $R=6.5$ for the single-frequency cases $k = 4,6,8$ under two sampling settings,
$M=10$ (as scatters) and $M=128$ (as curve). The 10-angle data are visibly under-sampled, leaving large portions of the oscillatory waveform unrecorded; even at the lowest wavenumber $k=4$ fine-scale features remain unresolved.
Dense sampling therefore recovers the missing physical information by resolving finer details in the measurement curve, providing the DSM with a more faithful representation of the scattered field and thereby enabling accurate source localization.

\begin{figure}[htb]
    \centering
    \includegraphics[width=0.95\textwidth]{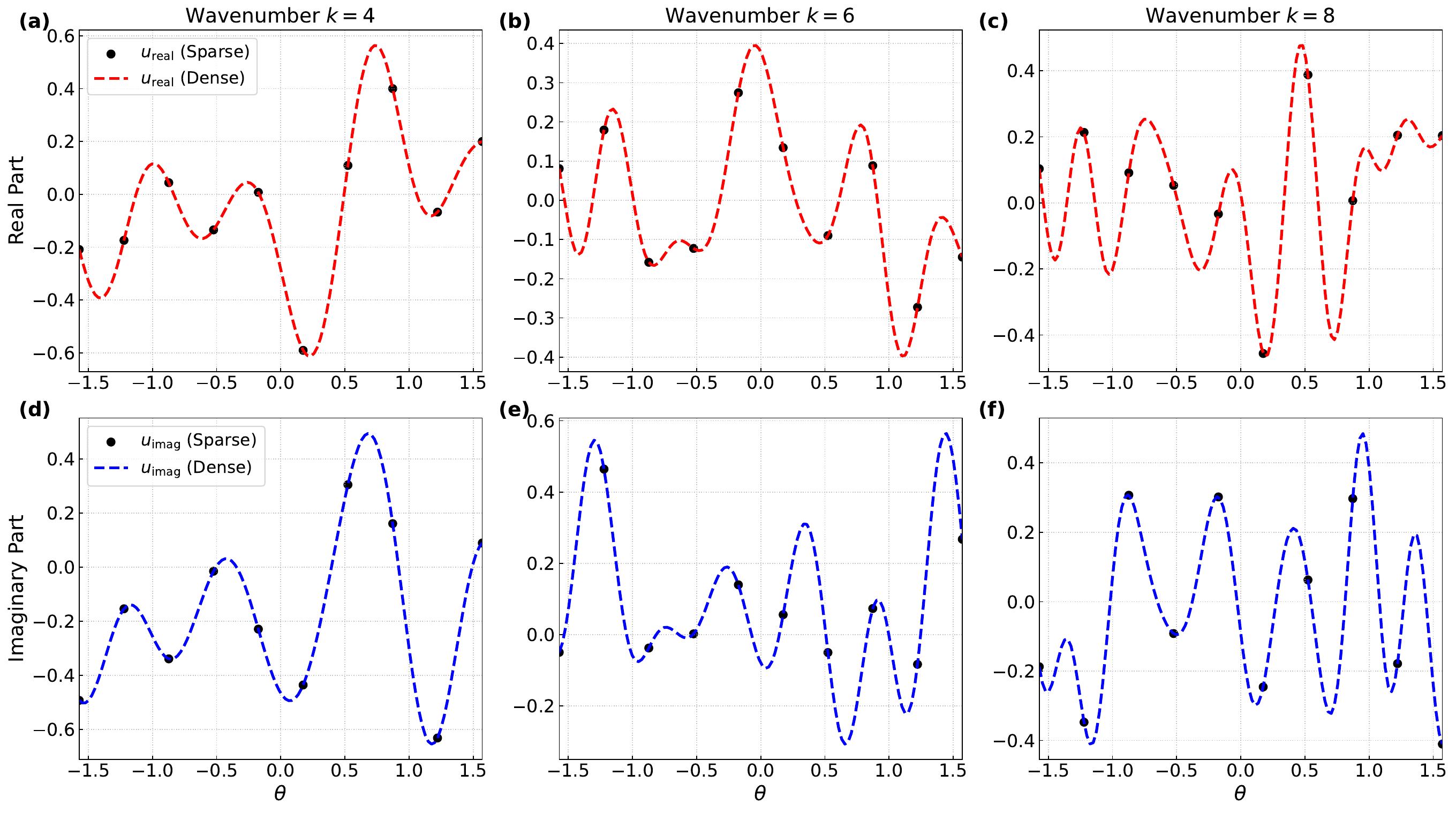}
    \caption{
        \textbf{Measured field along the measurement curve.} (a-c): real part; (d-f): imaginary part at $k=4,6,8$. Black dots: sparse ($M=10$) measurements; dashed curves: dense ($M=128$) measurements. Sparse measurements under-resolve oscillations, while dense measurements capture the waveform and supplie richer information to DSM.
    }
    \label{fig:compare_sensor_10_128}
\end{figure}

\subsection{Neural operator approach for inverse source problem}\label{sec2.3}

Inverse source localization based on limited measurements is notoriously ill-posed when the sensors cover only a partial aperture and the number of sensors is small. 
Motivated by the observations in Examples~\ref{exa:10_measurements}-\ref{exa:128_measurements} which indicate that classical solvers like DSM can enhance the localization quality when fed with sufficiently rich wavefield information, 
we aim to design a reconstruction scheme that converts sparse measurements on partial aperture into a reliable dense trace, which can then be fed to a classical inverse solver to remedy the instability and accuracy loss associated with sparse and partial-aperture data. 
With this guiding idea, we now formalize the reconstruction operator and our framework.

Following the notation in Section~2.1, the unknown $N$ point sources $\{z_j\}_{j=1}^N$ are supported in the sampling domain $V$, and sensors lie on the measurement curve $\Gamma$. 
A partial aperture $\Gamma$ carries a fixed array of $M$ sensors (sparse) $\Gamma_h:=\{x_{1,\mathrm{sen}},\ldots,x_{M,\mathrm{sen}}\} $. 
For any configuration of sources inside $V$, the scattered field $u:\Gamma\to\mathbb C$ is uniquely determined and the sensors record $u_{m,\mathrm{sen}} = u(x_{m,\mathrm{sen}})$, $m=1,\ldots,M$. 
For every wavenumber $k>0$, the field satisfies the Helmholtz system \eqref{eq:helmholtz_a}-\eqref{eq:helmholtz_b}. 

Let $\mathbf u_{\mathrm{sen}} := (u_{1,\mathrm{sen}},\ldots,u_{M,\mathrm{sen}})\in\mathbb C^M$. 
We seek a reconstruction operator $\mathcal T:\mathbb C^M\to L^2(\Gamma)$ such that
\begin{equation}\label{eq:reconstruction_problem}
\mathcal T \mathbf u_{\mathrm{sen}}
:= \arg\min_{v\in\mathcal V}\;\|v-u\|_{L^2(\Gamma)}^2,
\end{equation}
where $\mathcal V := \left\{\, v\in L^2(\Gamma) \mid v(x_{m,\mathrm{sen}})=u_{m,\mathrm{sen}},\; m=1,\ldots,M \right\}.$
For brevity we still write $\mathcal T \mathbf u_{\mathrm{sen}}$ for the function $u(x)$ on $\Gamma$. 
Problem \eqref{eq:reconstruction_problem} formalizes sparse-to-dense completion as identifying a reconstruction operator defined on $\Gamma$.

In practice the exact operator $\mathcal T$ is unknown. 
We therefore approximate the mapping by a neural network with parameters $\eta$,
\begin{equation}\label{eq:approximate_by_neural_network}
(\mathbf u_{\mathrm{sen}}, x) \longmapsto [\mathcal T_{\eta} \mathbf u_{\mathrm{sen}}](x),\qquad x\in\Gamma.
\end{equation}
We propose a modular hybrid framework that couples a data-driven reconstruction operator with a classical inverse solver; see Figure~\ref{fig:framework}. 
The operator $\mathcal T_\eta$ is trained offline to approximate \eqref{eq:approximate_by_neural_network}. 
At inference step, a new sparse vector $\mathbf u_{\mathrm{sen}}^{(\mathrm{new})}$ on $\Gamma_h$ is first interpolated (DeepONet \cite{lu2021learning} in this work) on the entire aperture $\Gamma$; the completed trace is then consumed by a conventional inverse solver (DSM in this work). 
This decoupling compensates for information loss caused by sparse measurement while preserving the interpretability and theoretical guarantees of the classical solver. 
Since the two modules are decoupled, either the operator learner (e.g., DeepONet, FNO, Transformer- or graph-based architectures) or the inverse solver (e.g., DSM, optimization approaches, Bayesian methods) can be substituted without modifying the workflow.
We summarize the workflow as following, and for deatils of offline training can be seen at Algorithm \ref{alg:offline}:

\textbf{Step 0 (Offline training).}
Generate synthetic dataset from diverse point-source configurations and train a neural operator $T_\eta$ to approximate the mapping \eqref{eq:approximate_by_neural_network}. 
The optimized surrogate $T_{\eta^\ast}$ enables real-time sparse-to-dense interpolation on $\Gamma$.

\textbf{Step 1 (Interpolation on $\Gamma$).}
Given an unseen sparse measurement $\mathbf u_{\mathrm{sen}}^{(\mathrm{new})}$ acquired on $\Gamma_h$, evaluate
$\tilde{\mathbf u} := T_{\eta^\ast} \mathbf u_{\mathrm{sen}}^{(\mathrm{new})}$
to obtain a dense, self-consistent trace over the entire aperture $\Gamma$.

\textbf{Step 2 (Inverse solver).}
Feed the dense trace $\tilde{\mathbf u}$ to a conventional inverse solver to localize the unknown sources.

\begin{remark}
    \textbf{DeepONet parameterization:}\,\,\,
    In this work, we represent $\mathcal T_\eta$ by an unstacked deep neural network (DeepONet) consisting of a ``branch net'' and a ``trunk net'' \cite{lu2021learning}. 
    The branch net takes $\mathbf u_{\mathrm{sen}}$ and outputs $(b_1,\ldots,b_q)\in\mathbb R^q$; the trunk net takes a query $x\in\Gamma$ and outputs $(t_1(x),\ldots,t_q(x))\in\mathbb R^q$. 
    Merging the two embeddings by an inner product yields a continuous prediction on $\Gamma$:
    \begin{equation}
    [\mathcal T_\eta \mathbf u_{\mathrm{sen}}](x) = \sum_{k=1}^q b_k(\mathbf u_{\mathrm{sen}})\, t_k(x),
    \end{equation}
    thereby accomplishing fast sparse-to-dense interpolation of the scattered field over the aperture.
\end{remark}

\begin{remark}
    \textbf{Dataset generation:} We draw $N_{\mathrm{cfg}}$ independent point-source configuration $\{z_j^{(\ell)}\}_{j=1}^N \subset V$, $\ell=1,\ldots,N_{\mathrm{cfg}}$, and select a wavenumber $k>0$. For each configuration we evaluate the field at the physical sensors to form
    \begin{equation*}
        \mathbf u_{\mathrm{sen}}^{(\ell)} := \bigl(u^{(\ell)}(x_{1,\mathrm{sen}}),\ldots,u^{(\ell)}(x_{M,\mathrm{sen}})\bigr)\in\mathbb C^M .
    \end{equation*}
    We then sample $N_{\mathrm{aux}}$ auxiliary locations $\{x_{p,\mathrm{aux}}^{(\ell)}\}_{p=1}^{N_{\mathrm{aux}}}\subset\Gamma$ and record $u_{p,\mathrm{aux}}^{(\ell)} := u^{(\ell)}(x_{p,\mathrm{aux}}^{(\ell)})$. This yields the offline dataset of triples
    \begin{equation*}
        \mathcal D \;:=\; \Bigl\{\bigl(\mathbf u_{\mathrm{sen}}^{(\ell)},\,x_{p,\mathrm{aux}}^{(\ell)},\,u_{p,\mathrm{aux}}^{(\ell)}\bigr)\Bigr\}_{\ell=1,\ldots,N_{\mathrm{cfg}}, \, p=1,\ldots,N_{\mathrm{aux}}}
        \;\subset\; \mathbb C^M \times \Gamma \times \mathbb C ,
    \end{equation*}
    with $N_{\mathrm{cfg}}N_{\mathrm{aux}}$ samples. 
\end{remark}

\begin{remark}
    \textbf{Loss function: }
    During training we draw mini-batches $\mathcal B\subset\mathcal D$ of size $N_{\mathrm{mb}}:=|\mathcal B|$ and minimize the mean-squared error
    \begin{equation*}
        \mathcal L(\eta)
        = \frac{1}{N_{\mathrm{mb}}}\sum_{(\ell,p)\in\mathcal B}
        \bigl|\,[\mathcal T_\eta \mathbf u^{(\ell)}_{\mathrm{sen}}]\bigl(x^{(\ell)}_{p,\mathrm{aux}}\bigr) - u^{(\ell)}_{p,\mathrm{aux}} \,\bigr|^2 .
    \end{equation*}
\end{remark}

The optimized surrogate $\mathcal T_{\eta^\ast}$ reconstructs high-fidelity virtual measurements and substantially enhances the accuracy and robustness of inverse localization from sparse partial-aperture data.

\begin{algorithm}[H]
\caption{Offline training of the reconstruction operator $\mathcal T_{\eta^\ast}$}
\label{alg:offline}
\begin{algorithmic}
\Require Domain $V$, curve $\Gamma$, wavenumber $k>0$; numbers $N_{\mathrm{cfg}}$, $N_{\mathrm{aux}}$; sensor set $\{x_{m,\mathrm{sen}}\}_{m=1}^{N_{\mathrm{sen}}}\subset\Gamma_h$.
\State Initialize network parameters $\eta$ of DeepONet $\mathcal T_\eta$ (branch/trunk).
\For{$\ell = 1,\ldots,N_{\mathrm{cfg}}$}
    \State Sample a configuration of $N$ point sources $\{(z^{(\ell)}_j,\lambda^{(\ell)}_j)\}_{j=1}^N \subset V$.
    \State Solve the forward model at sensors to get $\mathbf u^{(\ell)}_{\mathrm{sen}}$.
    \State Sample auxiliary locations $\{x^{(\ell)}_{p,\mathrm{aux}}\}_{p=1}^{N_{\mathrm{aux}}}\subset \Gamma$ and record $\{ u^{(\ell)}_{p,\mathrm{aux}}\}_{p=1}^{N_{\mathrm{aux}}}$.
\EndFor
\State Build the offline dataset $\mathcal D \;:=\; \Bigl\{\bigl(\mathbf u_{\mathrm{sen}}^{(\ell)},\,x_{p,\mathrm{aux}}^{(\ell)},\,u_{p,\mathrm{aux}}^{(\ell)}\bigr)\Bigr\}_{\ell=1,\ldots,N_{\mathrm{cfg}}, \, p=1,\ldots,N_{\mathrm{aux}}}$.

\While{not converged}
    \State Sample a mini-batch $\mathcal B\subset\mathcal D$ of size $N_{\mathrm{mb}}$.
    \State Compute the MSE loss $\mathcal L(\eta)
        = \frac{1}{N_{\mathrm{mb}}}\sum_{(\ell,p)\in\mathcal B}
        \bigl|\,[\mathcal T_\eta \mathbf u^{(\ell)}_{\mathrm{sen}}]\bigl(x^{(\ell)}_{p,\mathrm{aux}}\bigr) - u^{(\ell)}_{p,\mathrm{aux}} \,\bigr|^2$.
    \State Update $\eta$ by AdamW with Cosine annealing warm restarts schedule.
\EndWhile
\State \textbf{return} Trained surrogate $\mathcal T_{\eta^\ast}$.
\end{algorithmic}
\end{algorithm}

\begin{figure}[t]
\includegraphics[width=1\linewidth]{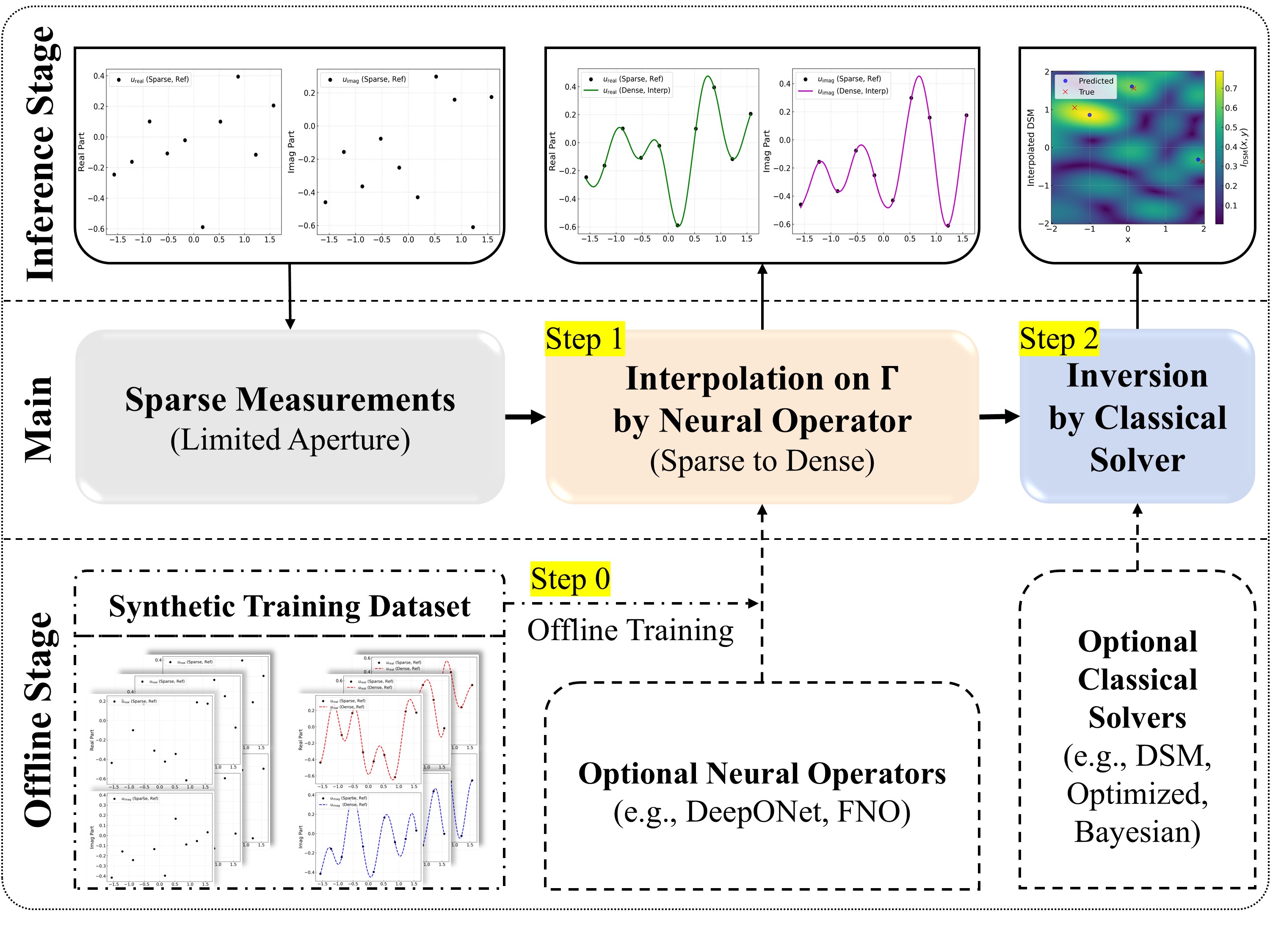}
    \caption{
        \textbf{Operator-learning-driven framework for sparse partial apertures.} Sparse boundary measurements are interpolated to a dense, self-consistent trace by a pre-trained neural operator; the dense data are then consumed by a classical inverse solver. The reconstruction and inversion modules are decoupled and can be swapped with alternative neural operators and classical solvers.
    }
    \label{fig:framework}
\end{figure} 


\section{Numerical Experiments}\label{sec3}

Let $V=[-2,2]^{2}\subset\mathbb R^{2}$. 
We consider $N\in\{2,3\}$ acoustical point sources $\{(z_{j}, \lambda_{j})\}_{j=1}^{N}$, where the centers $z_{j}$ are drawn independently and uniformly from $V$ and the magnitudes  $\lambda_{j}\sim\mathcal U(5,7)$.
The scattered field $u$ satisfies the homogeneous Helmholtz equation \eqref{eq:helmholtz_a} with wavenumber $k=4$ and the Sommerfeld radiation condition \eqref{eq:helmholtz_b}.

Measurements are acquired on the radius-$R$ circle $\Gamma=\{x\in\mathbb R^{2}\;:\;|x|=R\}$ with $R=6.5$, but only within one of the three partial apertures
\begin{equation*}
    S_{1}=\left[-\frac{\pi}{2},\frac{\pi}{2}\right],\quad
    S_{2}=\left[-\frac{\pi}{3},\frac{\pi}{3}\right],\quad
    S_{3}=\left[-\frac{\pi}{4},\frac{\pi}{4}\right].
\end{equation*}

We deploy $N_{\mathrm{sen}}^{(q)}, \,q=1,2,3$ equi-angular sensors for the sector $S_{q}$, with $\bigl(N_{\mathrm{sen}}^{(1)},N_{\mathrm{sen}}^{(2)},N_{\mathrm{sen}}^{(3)}\bigr)=(10,8,6)$.
For $\theta_{m}\in S_{q}$ the $m$-th measurement is $u_{m}=u(R\cos\theta_{m},R\sin\theta_{m}) + \epsilon_{m}$, where $\epsilon_{m}\sim\mathcal N_{\mathbb C}\bigl(0,0.05^{2}|u|^{2}\bigr)$ is synthetic noise.

For each aperture, we build $N_{\mathrm{cfg}}=10,000$ source configurations.
For the sparse measurement vector $\mathbf u_{\mathrm{sen}}\in\mathbb C^{N_{\mathrm{sen}}^{(q)}}$ , we randomly record $N_{\mathrm{aux}}=128$ auxiliary samples $u(R\cos\varphi_{p},R\sin\varphi_{p})$ at angles $\varphi_{p}, p = 1, \cdots, N_{\rm aux}$.
The resulting data set of $1.28\times10^{6}$ triplets is used for training; An independent test set of equal size is reserved for evaluation.

For each aperture, we train an independent Deep Operator Network $\mathcal T_{\eta}^{(q)}$.
Since the number of sensors varies with $q$, the branch net first applies a  two-layer MLP $(N_{\rm sen}^{(q)}\!-\!256\!-\!256)$ to every sensor measurement.
The trunk net uses a three-layer MLP $(1\!-\!256\!-\!256-\!256)$ to embed a query angle $\varphi$.
Both networks employ $\tanh$ activations.
The inner product of the two 256-dimensional embeddings yields the prediction $ \widehat u(\varphi) = [\mathcal T_{\eta}^{(q)}\mathbf u_{\mathrm{sen}}](\varphi)$.

Parameters are optimized with AdamW \cite{loshchilov2017decoupled} (initial learning rate $10^{-3}$, weight decay $10^{-4}$), and a cosine-annealing schedule with warm restarts \cite{loshchilov2016sgdr} ($T_{0}=1000$, $T_{\mathrm{mult}}=2$, $\eta_{\min}=10^{-6}$).
Each mini-batch aggregates $50,000$ $(\mathbf u_{\mathrm{sen}},\varphi,u_{\mathrm{aux}})$ triplets drawn across configurations and query points.
Training converges after $10,000$ iterations, requiring approximately two mins on one NVIDIA RTX3090.
A complete list of parameters is reported in Table \ref{tab:params}.

\begin{table}[tb]
\centering
\caption{\textbf{Numerical constants used throughout the experiments.}}
\label{tab:params}
\begin{tabular}{@{}llc@{}}
\toprule
\textbf{Category} & \textbf{Parameter} & \textbf{Value / Range} \\ \midrule

\multirow{4}{*}{\rm Physical model} 
 & Wavenumber $k$ & $4$ \\ 
 & Domain $V$ & $[-2,2]^{2}$ \\ 
 & Source amplitude $\lambda_{j}$ & ${\mathcal U(5,7)}$ \\ 
 & Number of sources $N$ & $2$ \text{or} $3$ \\ \midrule
 
\multirow{6}{*}{\rm Dataset $^{\dagger}$} 
 & Radius $R$ & $6.5$ \\
 & Apertures $S_{1},S_{2},S_{3}$ & $\bigl[-\frac{\pi}{2},\frac{\pi}{2}\bigr]$, $\bigl[-\frac{\pi}{3},\frac{\pi}{3}\bigr]$, $\bigl[-\frac{\pi}{4},\frac{\pi}{4}\bigr]$ \\
 & Sensors $N_{\rm sen}^{(1)},N_{\rm sen}^{(2)},N_{\rm sen}^{(3)}$ & $10,\,8,\,6$ \\
 & Configurations $N_{\rm cfg}$ & $10{,}000$ \\
 & Auxiliary samples $N_{\rm aux}$ & $128$ \\ \midrule
 
\multirow{3}{*}{\rm DeepONet}
 & Branch net & $(N_{\rm sen},\,256,\,256)$ \\
 & Trunk net & $(1,\,256,\,256,\,256)$ \\
 & Activation & ${\rm Tanh}$ \\ \midrule
 
\multirow{4}{*}{\rm Training}
 & Optimizer & AdamW \\
 & Initial learning rate & $10^{-3}$ \\
 & Cosine annealing & $T_{0}=1000$, $T_{\rm mult}=2$, $\eta_{\min}=10^{-6}$ \\
 & Warm restarts schedule &\\
 & Mini--batch size & $50{,}000$ triplets \\ \midrule
 
\multirow{2}{*}{\rm Runtime}
 & Iterations to converge & $10{,}000$ \\
 & Hardware & NVIDIA RTX3090 \\
\bottomrule
\multicolumn{3}{@{}p{0.85\linewidth}@{}}{\footnotesize%
$^{\dagger}$\;Measurements are restricted to one aperture at a time; three DeepONet models are trained independently.}
\end{tabular}
\end{table}

\subsection{Two-source localization}\label{sec3.1}

We begin our numerical investigation by fixing the number of point sources at $N=2$, and keeping all other parameters identical to those described in the experimental setup.

Figure~\ref{fig:loss_curves_2} displays the evolution of the training loss $\mathcal{L}$ over $10{,}000$ optimization steps for the three aperture configurations. 
In all cases, the loss decreases rapidly by approximately three orders of magnitude within the first $2000$ iterations and subsequently stabilizes around a low plateau. 
Notably, all three curves share an almost identical shape: after a drop of roughly three orders of magnitude during the first $2000$ iterations, the loss flattens at a similar low value, indicating the robustness of the training process with respect to aperture size. 

\begin{figure}[htb]
    \centering
    \includegraphics[width=0.95\textwidth]{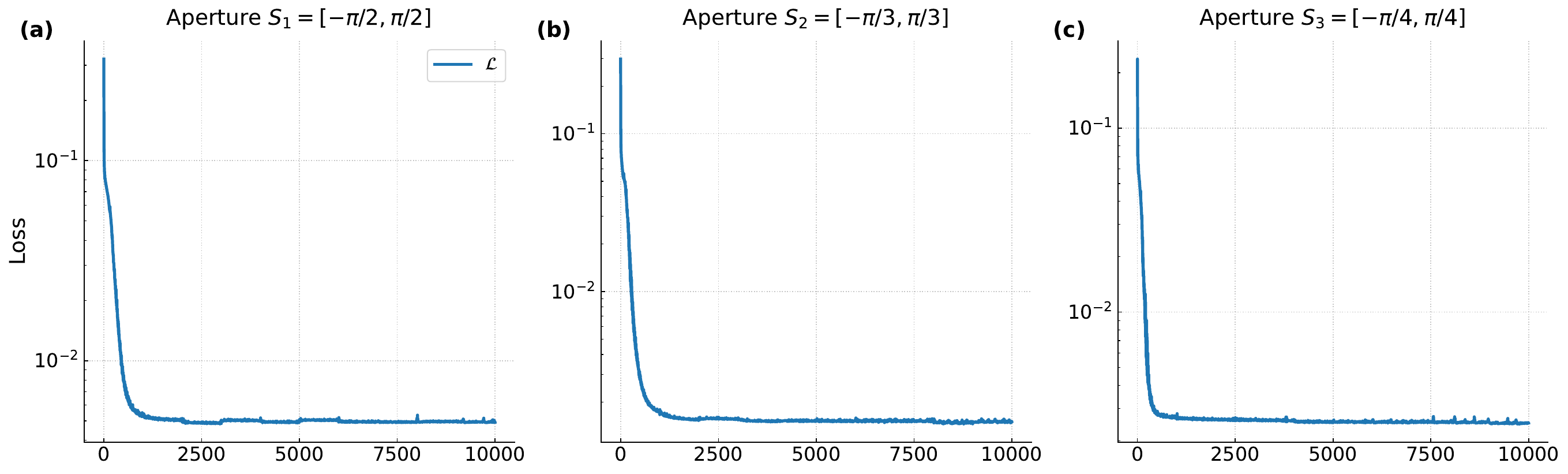}
    \caption{
        \textbf{Training curves for two-source interpolation ($N=2$).} Log-linear loss versus iteration for the three apertures $[-\pi/n,\pi/n],n=2,3,4$.
    }
        \label{fig:loss_curves_2}
\end{figure}

To evaluate the accuracy of the learned operator, we consider three apertures with random  two-source locations
\begin{align*}
    S_1:&\ \{(1.37,-0.35),\,(-0.83,-1.24)\},\\
    S_2:&\ \{(-1.09,-1.91),\,(-1.92,0.08)\},\\
    S_3:&\ \{(-0.26,-1.78),\,(-1.18,1.65)\}.
\end{align*}
Source amplitudes are drawn from $\mathcal U(5,7)$. 
Using a single wavenumber $k=4$ and the same sensor placement as in the training set, we collect $10$, $8$, and $6$ measurements on $S_1$, $S_2$, and $S_3$, respectively, and perform localization using only these measurements.
We then compare the predicted complex field $\widehat{u}(\theta)$ on each aperture $S_q$ against the densely sampled ground-truth field $u(\theta)$.
Figure~\ref{fig:compare_interp_measurement_2} confirms that the neural-operator interpolation result closely follows the reference in both real and imaginary parts. 

\begin{figure}[htb]
    \centering
    \includegraphics[width=0.95\textwidth]{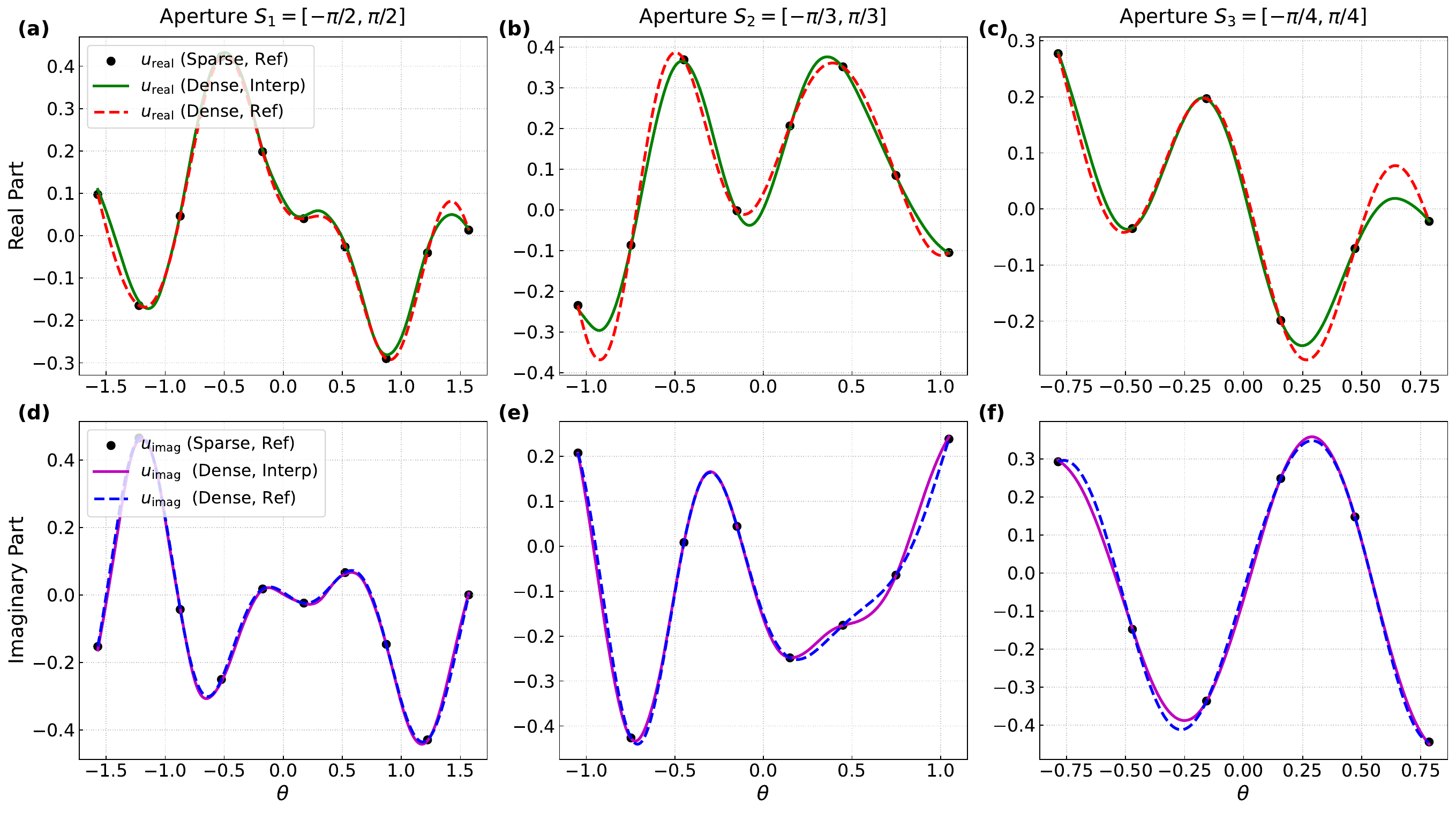}
    \caption{
        \textbf{Comparison of operator-interpolated dense traces with ground-truth data ($N=2$).} Real part (a-c) and imaginary part (d-f) of the predicted dense trace $\widehat u(\varphi)$ (solid), the reference dense trace $u(\varphi)$ (dashed), and the $M$ measurements (dots) for the
        three apertures.
    }
    \label{fig:compare_interp_measurement_2}
\end{figure}

Figure~\ref{fig:compare_dsm_interpolated_dsm_2} illustrates how DeepONet-based interpolation affects DSM-based localization.
We compare DSM localization maps obtained from the original sparse data (top row) with those computed from the DeepONet-interpolated data (bottom row). 
For each aperture, the blue circles indicate the predicted source locations, and the red crosses denote the ground truth. 
Without interpolation, DSM mislocalizes at least one source, most severely for $S_2$ and $S_3$. 
With DeepONet interpolation, all sources are correctly identified and mean absolute localization error drops from $1.316$, $0.852$ and $1.333$ to $0.244$, $0.231$ and $0.114$, respectively. A detailed per-aperture comparison is summarized in Table~\ref{tab:error_comparison_2}.

\begin{table}[htb]
  \caption{\textbf{Error comparison of DSM and DeepONet-interpolated DSM ($N=2$).}
  Mean absolute error (MAE) with DSM from raw sparse data and DeepONet-interpolated data (ours).}
  \label{tab:error_comparison_2}
  \centering
  \setlength{\tabcolsep}{5pt}
  
  \resizebox{\textwidth}{!}{
  \begin{tabular}{l*{3}{ccc}}
    \toprule
    & \multicolumn{3}{c}{$S_1$} & \multicolumn{3}{c}{$S_2$} & \multicolumn{3}{c}{$S_3$} \\
    \cmidrule(lr){2-4}\cmidrule(lr){5-7}\cmidrule(lr){8-10}
    Method & $z_{\text{true}}$ & $z_{\text{pred}}$ & MAE
           & $z_{\text{true}}$ & $z_{\text{pred}}$ & MAE
           & $z_{\text{true}}$ & $z_{\text{pred}}$ & MAE \\
    \midrule
    DSM
      & \begin{tabular}{@{}c@{}}$(1.37,-0.35)$\\$(-0.83,-1.24)$\end{tabular}
      & \begin{tabular}{@{}c@{}}$(1.308,-0.375)$\\$(1.879,-1.218)$\end{tabular}
      & 1.316
      & \begin{tabular}{@{}c@{}}$(-1.09, -1.91)$\\$(-1.92,0.08)$\end{tabular}
      & \begin{tabular}{@{}c@{}}$(-0.466, -1.819)$\\$(-0.406 , -1.789)$\end{tabular}
      & 0.852
      & \begin{tabular}{@{}c@{}}$(-0.26,-1.78)$\\$(-1.18, 1.65)$\end{tabular}
      & \begin{tabular}{@{}c@{}}$(-0.496, -1.879)$\\$(-0.586, -1.909)$\end{tabular}
      & 1.333 \\
    \midrule
    Ours
      & \begin{tabular}{@{}c@{}}$(1.37,-0.35)$\\$(-0.83,-1.24)$\end{tabular}
      & \begin{tabular}{@{}c@{}}$(1.127,-0.406)$\\$(-0.406,-1.157)$\end{tabular}
      & 0.244
      & \begin{tabular}{@{}c@{}}$(-1.09,-1.91)$\\$(-1.92,0.08)$\end{tabular}
      & \begin{tabular}{@{}c@{}}$(-0.436, -1.789)$\\$(-2, 0.045)$\end{tabular}
      & 0.231
      & \begin{tabular}{@{}c@{}}$(-0.26,-1.78)$\\$(-1.18,1.65)$\end{tabular}
      & \begin{tabular}{@{}c@{}}$(-0.466, -1.939)$\\$(-1.067, 1.789)$\end{tabular}
      & 0.114 \\
    \bottomrule
  \end{tabular}
  }
\end{table}

These results demonstrate that operator-learning-driven interpolation significantly improves DSM localization accuracy.

\begin{figure}[htb]
    \centering
    \includegraphics[width=0.95\textwidth]{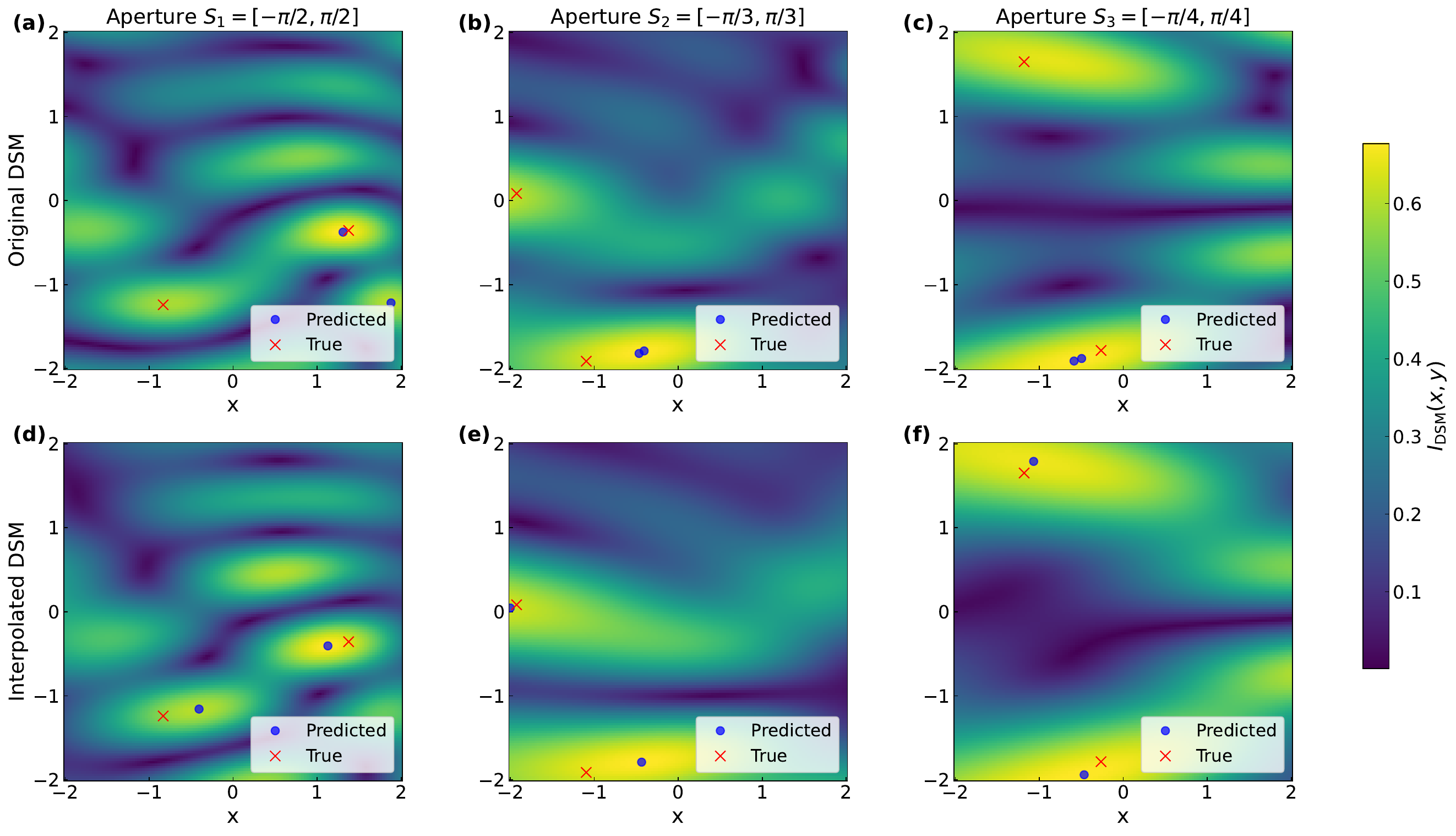}
    \caption{
        \textbf{Effect of operator interpolation on DSM source localization ($N=2$).} Top: DSM from raw sparse data; bottom: DSM from DeepONet-interpolated data; columns: apertures $[-\pi/n,\pi/n],n=2,3,4$. Red crosses: truth; blue circles: estimates. Mean absolute error drops from $1.316$, $0.852$, $1.333$ to $0.244$, $0.231$, $0.114$, respectively.
        }
    \label{fig:compare_dsm_interpolated_dsm_2}
\end{figure}

\subsection{Three-source localization}\label{sec3.2}

We now turn to the case of three point sources $N=3$, while keeping all other parameters unchanged to those described in the experimental setup.

Figure~\ref{fig:loss_curves_3} shows the training loss $\mathcal{L}$ over $10{,}000$ optimization steps for each of the three aperture configurations. 
A logarithmic ordinate highlights the convergence behavior. 
All three curves share the same shape: a drop of roughly three orders of magnitude in the first $2000$ iterations is followed by a common low plateau, which indicates that convergence speed and final loss are essentially insensitive to aperture width.

\begin{figure}[htb]
    \centering
    \includegraphics[width=\textwidth]{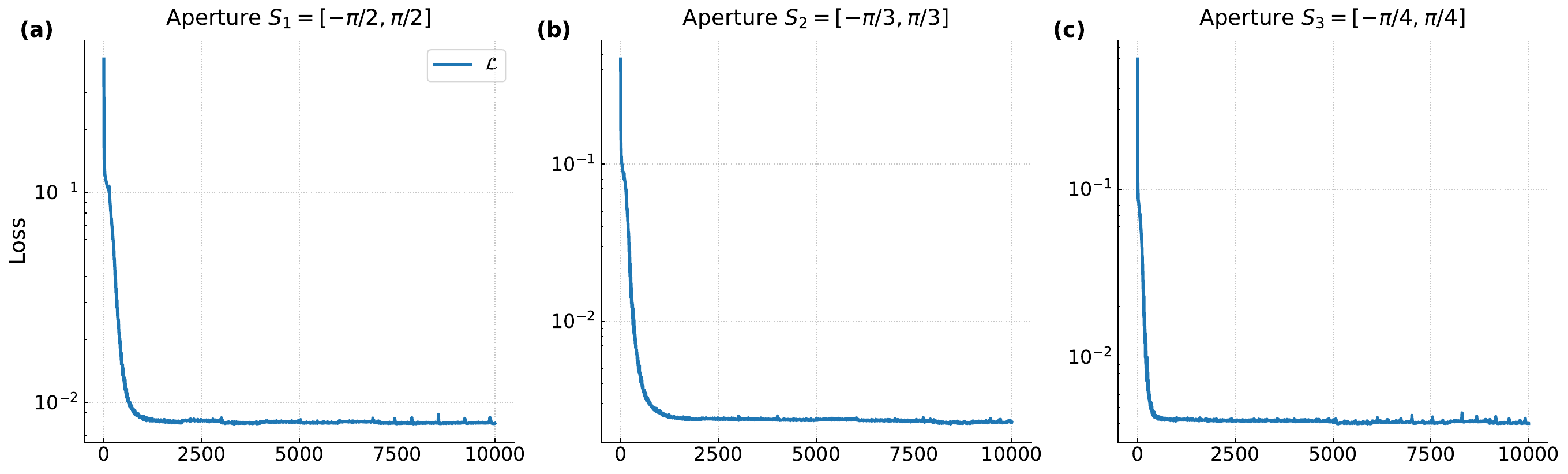}
    \caption{
        \textbf{Training curves for two-source interpolation ($N=3$).} Log-linear loss versus iteration for the three apertures $[-\pi/n,\pi/n],n=2,3,4$.
    }
    \label{fig:loss_curves_3}
\end{figure}

We test the random three-source configuration per aperture:
\begin{align*}
    S_1:&\ \{(1.61, 1.59), (0.64, -1.31), (-1.26, 0.63)\},\\
    S_2:&\ \{(1.20, -0.59), (-0.69, 1.96), (-1.43, -1.73)\},\\
    S_3:&\ \{(1.05, 1.38), (-0.66, -1.82), (-1.65, 0.38)\}.
\end{align*}
With $\lambda_j \sim \mathcal U(5,7)$, a single wavenumber $k=4$ and the same sensor placement as in training, we acquire $10$, $8$, and $6$ measurements on $S_1$, $S_2$, and $S_3$, respectively, and carry out localization using only these sparse measurements.
        
Figure~\ref{fig:compare_interp_measurement_3} confirms that the operator-generated dense trace follows the reference almost perfectly in both the real and imaginary parts. 
Figure~\ref{fig:compare_dsm_interpolated_dsm_3} illustrates how DeepONet-based interpolation affects DSM-based source localization. 
DSM localization maps obtained from the original sparse data (top row) are contrasted with those computed from the operator-completed data (bottom row). With DeepONet completion, all three sources are recovered and the mean absolute localization error falls from $0.573$, $0.728$ and $0.547$ to $0.094$, $0.243$ and $0.166$, respectively. A detailed per-aperture comparison is summarized in Table~\ref{tab:error_comparison_3}.

\begin{figure}[htb]
    \centering
    \includegraphics[width=0.95\textwidth]{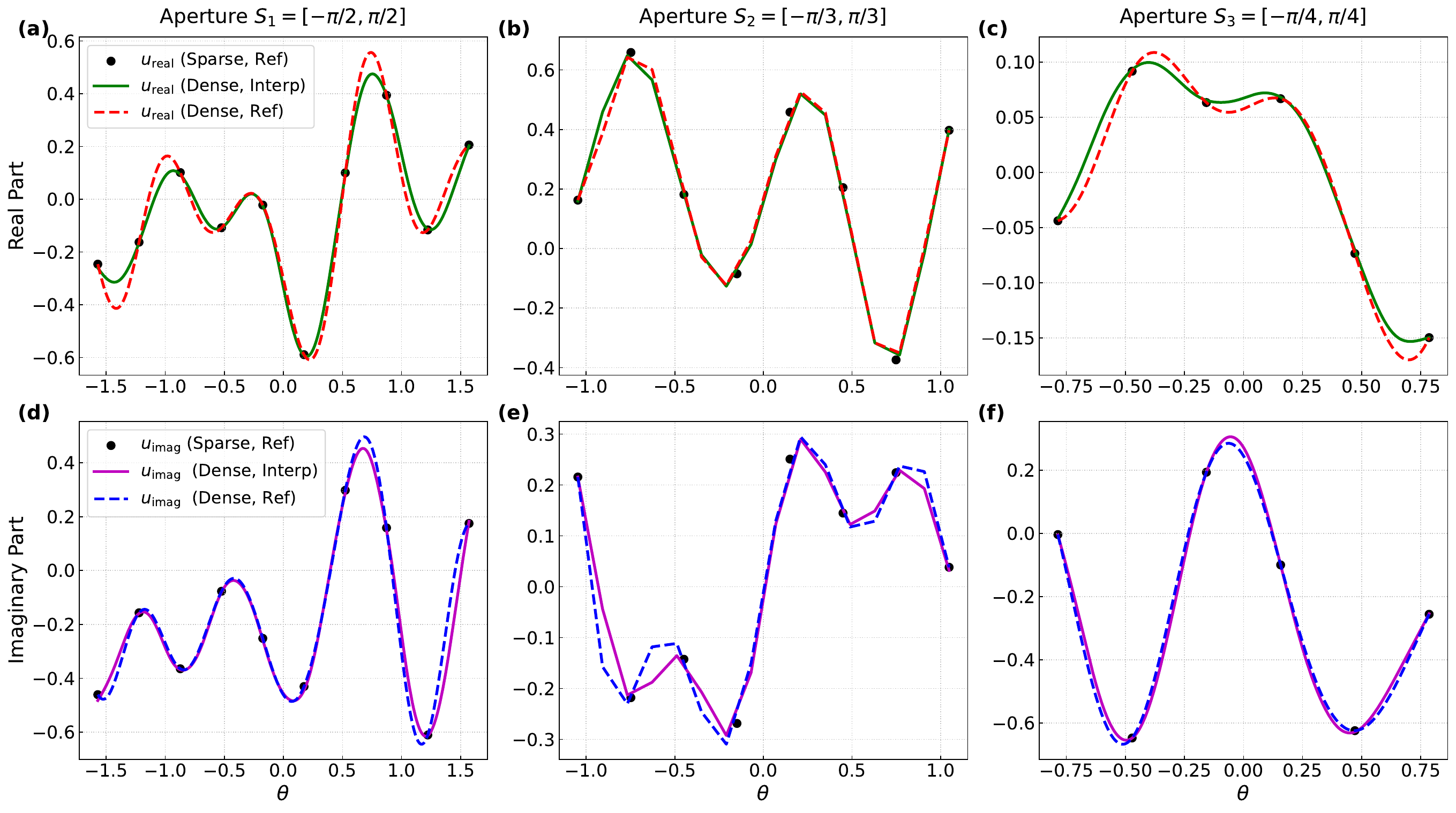}
    \caption{
        \textbf{Comparison of operator-interpolated dense traces with ground-truth data ($N=3$).} Real part (a-c) and imaginary part (d-f) of the predicted dense trace $\widehat u(\varphi)$ (solid), the reference dense trace $u(\varphi)$ (dashed), and the $M$ measurements (dots) for the
        three apertures.
    }
    \label{fig:compare_interp_measurement_3}
\end{figure}

\begin{figure}[htb]
    \centering
    \includegraphics[width=0.95\textwidth]{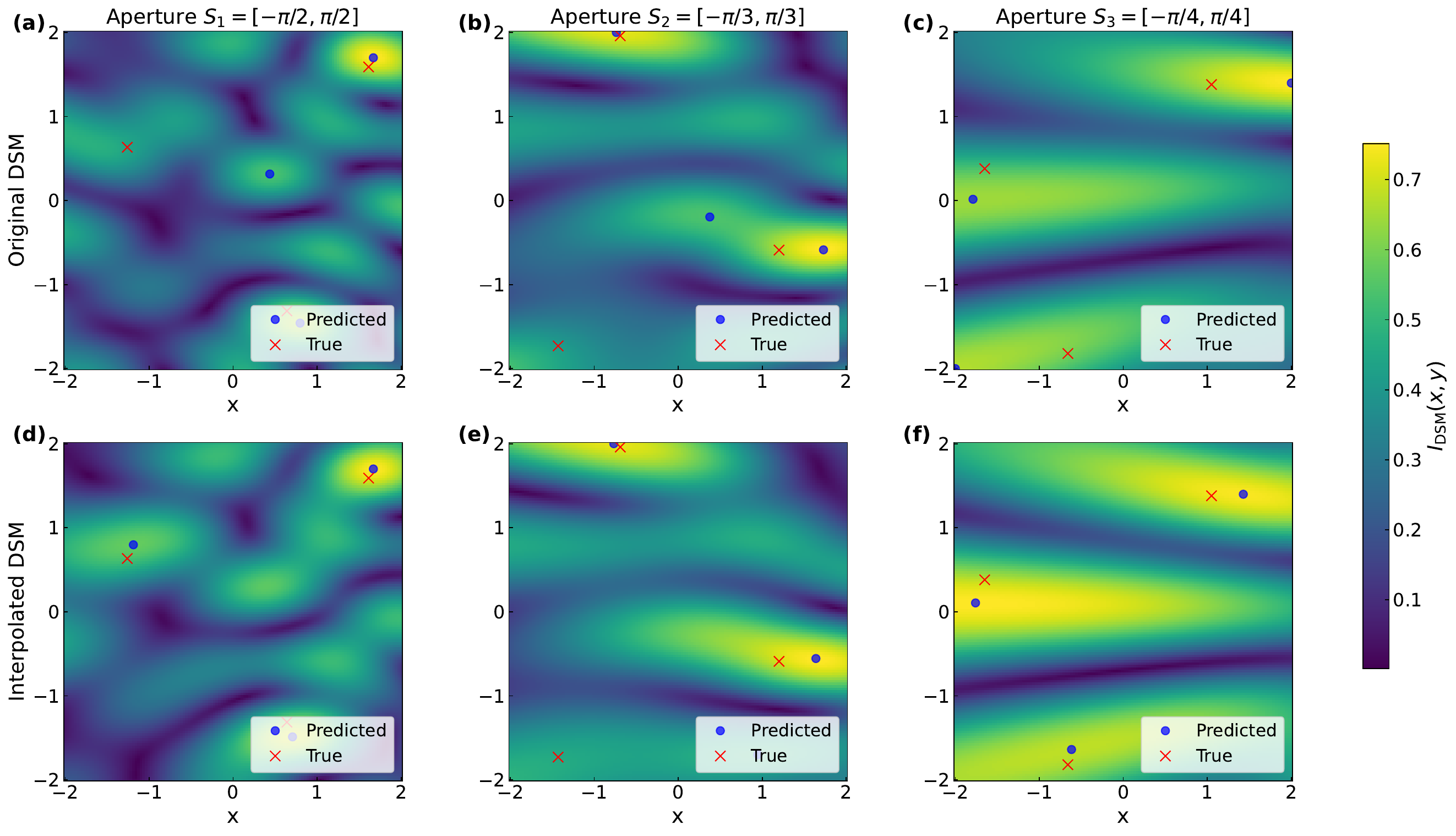}
    \caption{
        \textbf{Effect of operator interpolation on DSM source localization ($N=3$).} Top: DSM from raw sparse data; bottom: DSM from DeepONet-interpolated data; columns: apertures $[-\pi/n,\pi/n],n=2,3,4$. Red crosses: truth; blue circles: estimates. Mean absolute error drops from $0.573$, $0.728$, $0.547$ to $0.094$, $0.243$, $0.166$, respectively.
        }
    \label{fig:compare_dsm_interpolated_dsm_3}
\end{figure}

\begin{table}[htb]
  \caption{\textbf{Error comparison of DSM and DeepONet-interpolated DSM ($N=3$).}
  Mean absolute localization error (MAE) with DSM from raw sparse data and DeepONet-interpolated data (ours).}
  \label{tab:error_comparison_3}
  \centering
  \setlength{\tabcolsep}{4pt}
  \renewcommand{\arraystretch}{1.15}
  \resizebox{\textwidth}{!}{%
  \begin{tabular}{l*{3}{ccc}}
    \toprule
    & \multicolumn{3}{c}{$S_1$} & \multicolumn{3}{c}{$S_2$} & \multicolumn{3}{c}{$S_3$} \\
    \cmidrule(lr){2-4}\cmidrule(lr){5-7}\cmidrule(lr){8-10}
    Method & $z_{\text{true}}$ & $z_{\text{pred}}$ & MAE
           & $z_{\text{true}}$ & $z_{\text{pred}}$ & MAE
           & $z_{\text{true}}$ & $z_{\text{pred}}$ & MAE \\
    \midrule
    DSM
      & \begin{tabular}{@{}c@{}}$(1.61,\,1.59)$\\$(0.64,\,-1.31)$\\$(-1.26,\,0.63)$\end{tabular}
      & \begin{tabular}{@{}c@{}}$(1.669,\,1.669)$\\$(0.796,\,-1.458)$\\$(0.436,\,0.315)$\end{tabular}
      & 0.573
      & \begin{tabular}{@{}c@{}}$(1.20,\,-0.59)$\\$(-0.69,\,1.96)$\\$(-1.43,\,-1.73)$\end{tabular}
      & \begin{tabular}{@{}c@{}}$(1.729,\,-0.586)$\\$(-0.736,\,2.0)$\\$(0.375,\,-0.195)$\end{tabular}
      & 0.728
      & \begin{tabular}{@{}c@{}}$(1.05,\,1.38)$\\$(-0.66,\,-1.82)$\\$(-1.65,\,0.38)$\end{tabular}
      & \begin{tabular}{@{}c@{}}$(2.0,\,1.398)$\\$(-2.0,\,-2.0)$\\$(-1.789,\,0.015)$\end{tabular}
      & 0.547 \\
    \midrule
    Ours
      & \begin{tabular}{@{}c@{}}$(1.61,\,1.59)$\\$(0.64,\,-1.31)$\\$(-1.26,\,0.63)$\end{tabular}
      & \begin{tabular}{@{}c@{}}$(1.669,\,1.669)$\\$(0.706,\,-1.488)$\\$(-1.187,\,0.796)$\end{tabular}
      & 0.094
      & \begin{tabular}{@{}c@{}}$(1.20,\,-0.59)$\\$(-0.69,\,1.96)$\\$(-1.43,\,-1.73)$\end{tabular}
      & \begin{tabular}{@{}c@{}}$(1.458,\,-0.526)$\\$(-0.827,\,2.0)$\\$(-0.676,\,-1.729)$\end{tabular}
      & 0.243
      & \begin{tabular}{@{}c@{}}$(1.05,\,1.38)$\\$(-0.66,\,-1.82)$\\$(-1.65,\,0.38)$\end{tabular}
      & \begin{tabular}{@{}c@{}}$(1.428,\,1.398)$\\$(-0.616,\,-1.639)$\\$(-1.759,\,0.105)$\end{tabular}
      & 0.166 \\
    \bottomrule
  \end{tabular}}
\end{table}

A natural question is whether conventional interpolation can also provide dense data for the DSM method. 
In the following, we compare the DeepONet-based neural operator with three classical methods that use the same sparse sensor measurements as in Figure~\ref{fig:compare_other_interp_3}:
\begin{itemize}
    \item[i)] \textbf{Piecewise linear (PL).} Trend-preserving and non-oscillatory; however, it underfits curvature and exhibits systematic bias in regions of high curvature.
    \item[ii)] \textbf{Piecewise quadratic (PQ).} Better curvature fidelity with lower error on smooth segments; however, it is sensitive near extrema and shows enlarged endpoint errors, especially in the imaginary component.
    \item[iii)] \textbf{Global polynomial.} Interpolates the samples exactly and often fits well near the interval center; however, it suffers from Runge-type boundary oscillations, leading to large error spikes and poor robustness.
\end{itemize}
By contrast, the DeepONet surrogate attains among the lowest pointwise mean absolute errors and faithfully reconstructs oscillatory structures in both the real and imaginary parts. 
Trained offline without enforcing nodal exactness, DeepONet learns a smooth, low-variation mapping, yielding stable, low-variance reconstructions on unseen data and maintaining high global accuracy under severe sensor sparsity.

\begin{figure}[htb]
    \centering
    \includegraphics[width=0.95\textwidth]{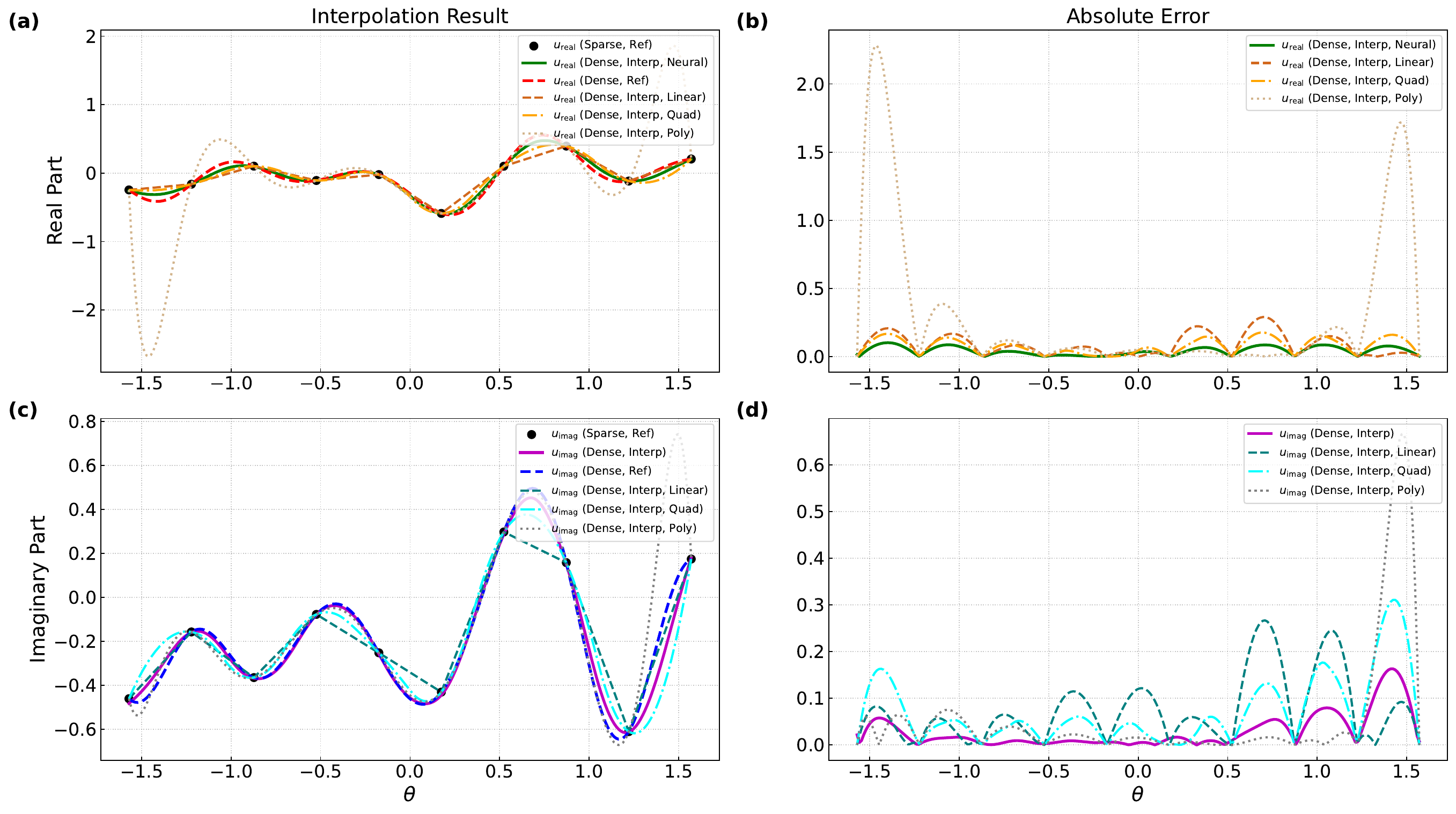}
    \caption{
        \textbf{Comparison of neural-operator interpolation with three classical methods on aperture $S_1=[-\pi/2,\pi/2]$.} 
        Panels (a,c): interpolated real and imaginary parts; (b,d): point-wise absolute errors.}
    \label{fig:compare_other_interp_3}
\end{figure}

We apply each interpolation scheme to densify the sparse measurements and feed the resulting fields into DSM. 
Localization performance is quantified by the mean absolute error in \eqref{eq:mae}. 
As shown in Figure~\ref{fig:compare_other_interp_mae}, the DeepONet-based interpolation has a clear advantage: it attains the lowest mean absolute error across resolutions, exhibits a sharp drop once the number of interpolation points exceeds $\approx 62$, and then remains low and stable. It captures fine oscillations without enforcing pointwise constraints and continues to benefit from additional sensors. 

By contrast, the piecewise-linear and piecewise-quadratic methods produce nearly flat MAE curves, adding points yields little to no improvement, which indicate limited expressivity and a persistent bias floor for oscillatory fields. The global polynomial interpolant suffers from Runge-type behavior, with large and erratic errors that do not reliably decrease as the sample count grows.

Using 128 points, Figure~\ref{fig:compare_other_interp_dsm} further confirms these trends: DSM with DeepONet delivers the highest-fidelity reconstructions and reliably localizes all three true point sources, whereas PL/PQ remain noticeably biased and the polynomial fit is unstable near the boundaries.

\begin{figure}[htb]
    \centering
    \includegraphics[width=0.55\textwidth]{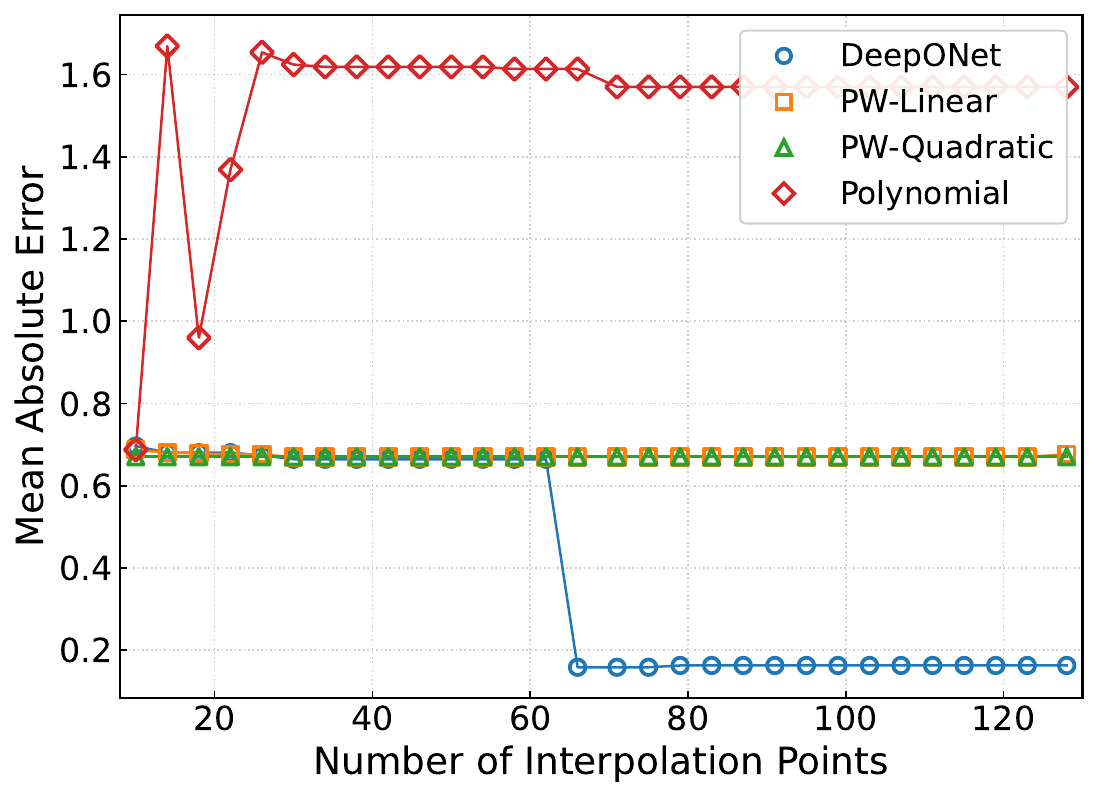}
    \caption{
        \textbf{Stability and accuracy of DSM with different interpolation schemes.} Mean absolute error versus the number of interpolation points $N$ used to densify the same sparse sensors.}
    \label{fig:compare_other_interp_mae}
\end{figure}

\begin{figure}[htb]
    \centering
    \includegraphics[width=0.65\textwidth]{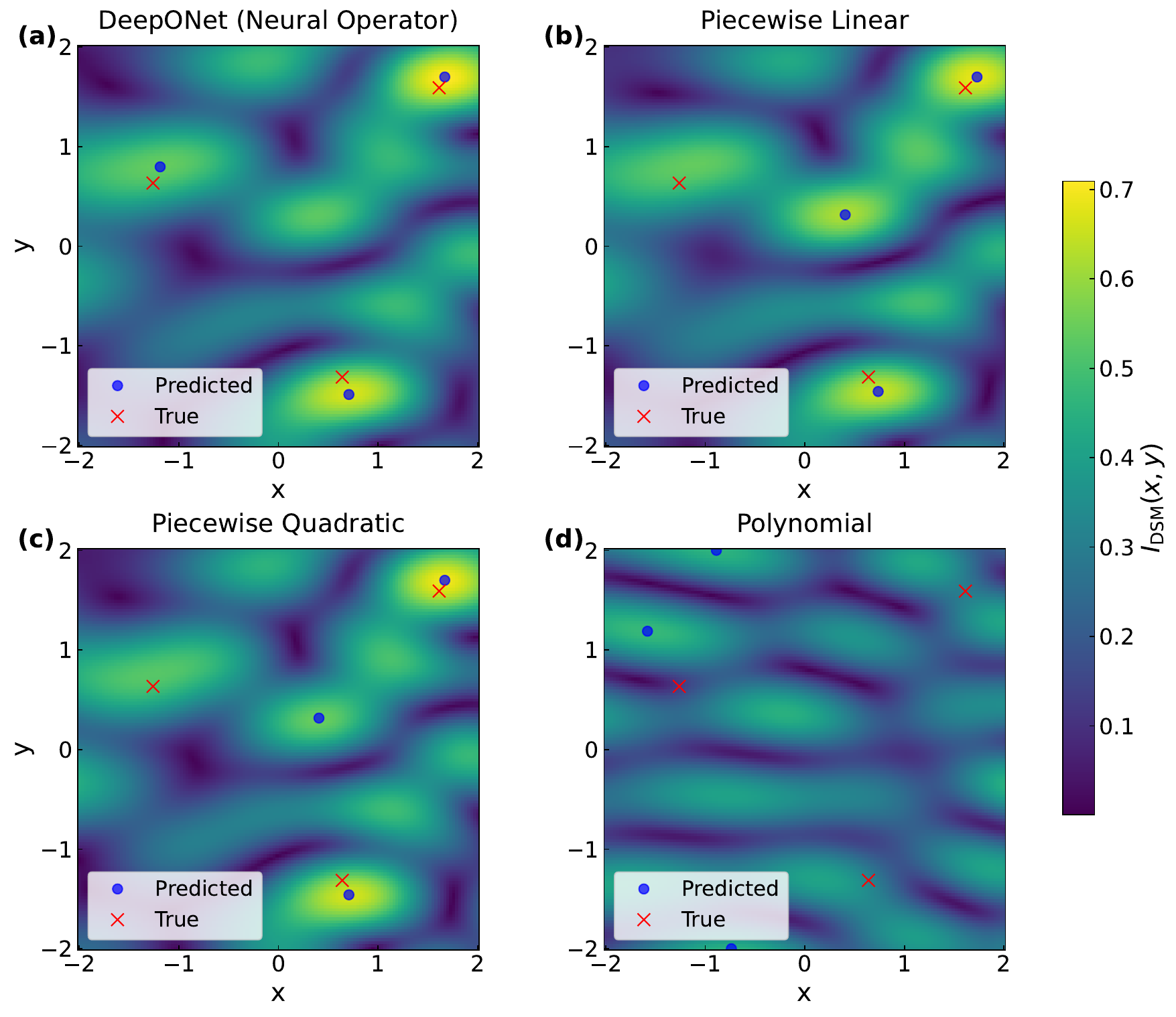}
    \caption{
        \textbf{DSM source localization with four interpolation schemes (128 points, $[-\pi/2, \pi/2]$).} Panels: (a) DeepONet (Neural Operator), (b) Piecewise Linear, (c) Piecewise Quadratic, (d) Polynomial.}
    \label{fig:compare_other_interp_dsm}
\end{figure}

\section{Discussion and Conclusion}\label{sec4}

In this work, we propose a modular framework for inverse Helmholtz source localization under limited measurements  acquired over a partial aperture. 
On the theory side, we extend the uniqueness theorem to finite aperture for Dirac-type sources, and show that the inverse Helmholtz problem involving Dirac-source terms  remains a uniquely solvable despite limited viewing aperture. We also derive an error estimate for the single-source case,  thereby indicating that grid-level accuracy is attainable even from sparse data.

Empirically, DSM deteriorates as the number of sources increases and under single-frequency measurements, due to limited aperture and sparse measurements. 
Notably, even with a fixed viewing aperture, densifying the measurements along the measurement curve improves DSM performance. 
Classical interpolation methods are sensitive to the sparsity pattern and often underperform on such sparse data, whereas data-driven operator learning can fit a reconstruction map from many sparse-to-dense exemplars.
We therefore learn a measurement-reconstruction operator using a  DeepONet with a branch–trunk architecture, trained offline to interpolate as few as six to ten measurements into a dense, self-consistent synthetic aperture. The interpolated data is then passed to DSM. 

The modularity of both completion and inversion components enables plug-and-play integration of  neural-operator variants and advanced inversion methods. 
We anticipate that the same paradigm will benefit a broad class of limited-aperture imaging modalities, including underwater acoustics, medical ultrasound, and through-wall radar.

\section*{Acknowledgments}

This work was supported by the National Key R \& D Program of China (2024YFA1012600) and NSFC Project (12431014).




\appendix

\section{Estimation of $g'(0)$ and $g'(\frac{1}{15k})$}\label{apd:1}

(i) For $k>0,\lambda>0$, we have $g'(0)>0$.
 
\begin{proof}
Substituting \(y = 0\) into \eqref{Dg}, we can obtain    
\begin{align}
    g'\left( 0 \right) =\lambda\frac{G(k\xi)}{|H_{0}^{1}\left( k\xi \right) |^3},
\end{align}
where $G(x):=J_0\left( x \right) J_1\left( x\right) +Y_0\left( x \right) Y_1\left( x\right)$.

We  need to prove that \( G(x) > 0 \), $\forall  x > 0 $. 
From \cite[p.444(1)]{BesselFunctions}, we have
\begin{align}\label{sqaure}
    J_{\nu}\left( x \right) ^2+Y_{\nu}\left( x \right) ^2=\frac{8}{\pi ^2}\int_0^{\infty}{K_0\left( 2x\sinh t \right) \cosh 2\nu t}\mathrm{d}t,    
\end{align}
where $K_0$ is the modified Bessel function of the second kind of zero order, and  in
general,  its integral expression is
\begin{align*}
    K_{\nu}\left( x \right) =\int_0^{\infty}{e^{-x\cosh u}}\cosh \nu u\mathrm{d}u,
\end{align*}
Substituting \(\nu = 0\) into \eqref{sqaure}, then differentiate both sides with respect to $x$; it follows that
\begin{align*} 
    &-2J_0\left( x \right) J_1\left( x \right) -2Y_0\left( x \right) Y_1\left( x \right) =\frac{8}{\pi ^2}\frac{\mathrm{d}}{\mathrm{d}x}\int_0^{\infty}{K_0\left( 2x\sinh t \right)}\mathrm{d}t \\ 
    &=\frac{8}{\pi ^2}\int_0^{\infty}{\int_0^{\infty}{\frac{\mathrm{d}}{\mathrm{d}x}e^{-2x\sinh t\cosh u}}\mathrm{d}u}\mathrm{d}t \\ 
    &=-\frac{16}{\pi ^2}\int_0^{\infty}{\sinh t\int_0^{\infty}{e^{-2x\sinh t\cosh u}}\cosh u\mathrm{d}u}\mathrm{d}t \\ 
    &=-\frac{16}{\pi ^2}\int_0^{\infty}{K_1\left( 2x\sinh t \right) \sinh t}\mathrm{d}t. 
\end{align*}

Namely 
\begin{align*}
    J_0\left( x \right) J_1\left( x \right) +Y_0\left( x \right) Y_1\left( x \right) &=\frac{8}{\pi ^2}\int_0^{\infty}{K_1\left( 2x\sinh t \right) \sinh t}\mathrm{d}t
    \\
    &=\frac{8}{\pi ^2}\int_0^{\infty}{\int_0^{\infty}{e^{-2x\sinh t\cosh u}}\sinh t}\cosh u\mathrm{d}t\mathrm{d}u>0.
\end{align*}
Thus, we have proved that  $G(x) > 0 ,x > 0,$ and then $g'(0)>0.$
\end{proof}

(ii) For $k\xi\ge 15 ,\lambda>0$, we have
 $g'(\frac{1}{15k})<0.$
 
\begin{proof}
    Substituting \(y = \frac{1}{15k}\) into \eqref{Dg}, we can obtain 
\begin{align}
    g'\left( \frac{1}{15k} \right) =-\lambda J_1(\frac{1}{15})\frac{1}{K(\frac{1}{15}+k\xi)}+\lambda J_0(\frac{1}{15})\frac{G\left( \frac{1}{15}+k\xi \right)}{K(\frac{1}{15}+k\xi)^3},
\end{align}
where $G(x):=J_0\left( x\right) J_1\left(x \right) +Y_0\left(  x\right) Y_1\left( x \right),K(x):=|H_{0}^{1}\left( x \right) |$.

Setting \( z = k\xi \), we define \( A(z) \) as:  
\begin{align*}
    A(z) = -\lambda J_1\left( \frac{1}{15} \right)\frac{1}{K\left( z + \frac{1}{15} \right)} + \lambda J_0\left( \frac{1}{15} \right)\frac{G\left( z + \frac{1}{15} \right)}{K\left( z + \frac{1}{15} \right)^3},
\end{align*}
since  \( A'(z) < 0 \) holds for \( z \in (0, \infty) \), it follows that  \( A(z) \)  is strictly  decreasing on \( (0, \infty) \).  

Thus, for any  \( z \in [15, \infty) \), we have:  
\[
g'\left( \frac{1}{15k} \right) = A(z) \le A(15) = -0.000982197 < 0,
\]  
the proof is completed.
\end{proof}

\end{document}